\newtheorem{theorem}{Theorem}
\newtheorem{corollary}{Corollary}
\newtheorem{lemma}{Lemma}
\newtheorem{proposition}{Proposition}
\theoremstyle{definition}
\newtheorem{definition}{Definition}
\newtheorem*{maintheorem}{Main Theorem}
\newcommand{\FF}{\mathbb{F}}
\newcommand{\Fq}{\mathbb{F}_q}
\newcommand{\Fqn}{\mathbb{F}_{q^n}}
\newcommand{\HH}{\mathcal H}
\newcommand{\D}{\mathcal D}
\newcommand{\C}{\mathcal C}
\newcommand{\cI}{\mathcal I}
\def\S{\mathcal{S}}
\def\Fq{{\mathbb{F}}_q}
\def\Aut{\mathrm{Aut}}
\def\PG{\mathrm{PG}}
\def\GL{\mathrm{GL}}
\def\dim{\mathrm{dim}}
\def\End{\mathrm{End}}
\newcommand{\rank}{\mathrm{rank}}
\begin{document}
\title{Binary additive MRD codes with minimum distance $n-1$ must contain a semifield spread set}
\author{John Sheekey
\footnote{School of Mathematics and Statistics,
University College Dublin,
Ireland; john.sheekey@ucd.ie}}
%\date{today}
\maketitle

\begin{abstract}
In this paper we prove a result on the structure of the elements of an additive {\it maximum rank distance (MRD) code} over the field of order two, namely that in some cases such codes must contain a semifield spread set. We use this result to classify additive MRD codes in $M_n(\FF_2)$ with minimum distance $n-1$ for $n\leq 6$. Furthermore we present a computational classification of additive MRD codes in $M_4(\FF_3)$. The computational evidence indicates that MRD codes of minimum distance $n-1$ are much more rare than MRD codes of minimum distance $n$, i.e. semifield spread sets. In all considered cases, each equivalence class has a known algebraic construction.
\end{abstract}

\section{Introduction}

%\section{Definitions and Background}

%Semifields and maximum rank distance codes are two topics 
In this paper we consider the classification problem for optimal codes in the rank metric, known as maximum rank distance (MRD) codes. Such codes are of interest in part due to applications \cite{SiKsKo2008}, and due to recently observed connections to topics in finite geometry such as semifields (nonassociative division algebras) and scattered linear sets \cite{SheekeyMRD}, \cite{Willems}. Not much is known about the structure of the set of invertible elements in an MRD code; in this paper we address this question, prove the following main result. 

\begin{maintheorem}
For any additively closed set $\C\subset M_n(\FF_2)$ such that $|\C|= 2^{2n}$ and $\rank(A)\geq n-1$ for all $0\ne A\in \C$, there exist two presemifields $(\FF_2^n,\star)$ and $(\FF_2^n,\circ)$ such that
\[
\C = \{x\mapsto a\star x-b\circ x:a,b\in \FF_2^n\}.
\]
\end{maintheorem}

In other words, binary additive MRD code with minimum distance $n-1$ are spanned by two binary additive MRD codes with minimum distance $n$ (which are also known as semifield spread sets). 

We use this result to advance the computational classification of MRD codes, showing that all additive MRD codes in $M_n(\FF_2)$ with minimum distance $n-1$ for $n\leq 6$ are equivalent to Delsarte-Gabidulin codes, and all additive MRD codes in $M_n(\FF_3)$ with minimum distance $n-1$ for $n\leq 4$ are equivalent to a code with a known algebraic construction.

The results of this paper suggest some natural open problems, which we list in the summary.

\subsection{MRD codes and Semifields}

Detailed background on MRD codes can be found for example in \cite{SheekeyMRD}. We recall here the essential definitions. We denote the set of $m\times n$ matrices over the field $\FF_q$ as $M_{m\times n}(\Fq)$, and if $m=n$ we write $M_n(\Fq)$.

\begin{definition}
A set $\C\subset M_{m\times n}(\Fq)$ is said to be a {\it maximum rank-distance (MRD) code} if
\begin{itemize}
\item[(1)] $|\C| = q^{n(m-d+1)}$, and
\item[(2)] $\rank(A)\geq d$ for all $0\ne A\in \C$.
\end{itemize}
We call $d$ the {\it minimum distance} of $\C$. If $\C$ is additively closed, we say that $\C$ is an {\it additive MRD code}. If $q^{n(m-d)}<|\C| <q^{n(m-d+1)}$ we say that $\C$ is a {\it quasi-MRD code}.
\end{definition}

In the case of $n=m=d$, MRD codes correspond precisely to algebraic structures known as {\it (pre)quasifields}, and additive MRD codes correspond to {\it (pre)semifields}; in this setting such a set of matrices is more commonly known as a {\it spread set} and {\it semifield spread set} respectively. 

\begin{definition}
A {\it finite presemifield} is a finite-dimensional division algebra over a finite field in which multiplication is not assumed to be associative. A {\it finite semifield} is a finite presemifield containing a multiplicative identity element. The {\it semifield spread set} associated to a presemifield $\S = (\Fq^n,\circ)$ is the set $\C(\S) := \{x\mapsto x\circ y : y\in \Fq^n\}\subset M_n(\Fq)$. 
\end{definition}

\begin{definition}
Two presemifields $(\Fq^n,\circ)$ and $\Fq^n,\star)$ are {\it isotopic} if there exist invertible additive maps $A,B,C$ on $\Fq^n$ such that
\[
A(x\circ y) = B(x)\star C(y).
\]
Two presemifields are {\it transpose-isotopic} if one is isotopic to the transpose of the other, where the transpose of a semifield is as defined in \cite{Knuth}. 
\end{definition}

It is well known that every presemifield is isotopic to a semifield \cite{LaPo2011}. The transpose of a semifield is one element of the {\it Knuth orbit}, which is a set of up to six isotopy classes. However, as only the transpose operation generalises to arbitrary MRD codes, we will not consider this here. We refer also to \cite{LaPo2011} for further information.

\begin{definition}
Two sets $\C_1,\C_2 \subset M_n(\Fq)$ are {\it equivalent} if there exist $A,B\in \GL(n,q)$, $\rho \in \Aut(\Fq)$, such that
\[
\C_1 = \{AX^\rho B:X\in \C_2\},
\]
or 
\[
\C_1 = \{A(X^T)^\rho B:X\in \C_2\}.
\]
The {\it left-} and {\it right-idealiser} of a set $\C\subset M_n(\Fq)$ are the sets $\cI_\ell(\C) = \{A:A\C\subset\C\}$ and $\cI_r(\C) = \{A:\C A\subset\C\}$ respectively. The {\it automorphism group} is $\Aut(\C) = \{(A,B):A\C B^T=\C\}\subset \GL(n,q)\times \GL(n,q)$. 
\end{definition}

Much of the focus on MRD codes to date has been on codes which are {\it $\Fqn$-linear}; that is, codes in which the left idealiser contains a field isomorphic to $\Fqn$. In the literature on semifields, the idealisers coincide with some of the {\it nuclei} \cite{MaPoNuc}.

The following is well known \cite{LaPo2011}.
\begin{theorem}
For any finite presemifield $\S$, the set $\C(\S)$ is an additive MRD code with minimum distance $n$. Every additive MRD code with minimum distance $n$ in $M_n(\Fq)$ is the semifield spread set of some finite presemifield. Two presemifields are isotopic or transpose-isotopic if and only if their semifield spread sets are equivalent
\end{theorem}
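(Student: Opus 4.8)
The plan is to treat the three assertions in turn, the first two by direct construction and the third by a bookkeeping computation.

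\textbf{First assertion.} For a presemifield $\S = (\Fq^n, \circ)$ I would work with the right-multiplication maps $R^\circ_y : x \mapsto x \circ y$. Distributivity in the first slot makes each $R^\circ_y$ additive, and $\Fq$-linearity of $\circ$ there places $R^\circ_y$ in $M_n(\Fq)$; distributivity in the second slot gives $R^\circ_{y+y'} = R^\circ_y + R^\circ_{y'}$, so $\C(\S) = \{R^\circ_y : y \in \Fq^n\}$ is additively closed. The absence of zero divisors does the rest: if $y \neq 0$ then $R^\circ_y x = 0$ forces $x = 0$, so every nonzero $R^\circ_y$ is invertible, hence of rank $n$, and simultaneously $y \mapsto R^\circ_y$ is injective, giving $|\C(\S)| = q^n$. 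Since $m = n = d$ the MRD count reads $q^{n(m-d+1)} = q^n$, so $\C(\S)$ is an additive MRD code of minimum distance $n$. I expect no difficulty here.

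\textbf{Second assertion.} Conversely, let $\C$ be additive MRD with $d = n$; then $|\C| = q^n$ and every nonzero element is invertible. Fixing any $e \neq 0$, the evaluation $\phi : \C \to \Fq^n$, $M \mapsto Me$, is additive and injective (invertibility kills the kernel), hence bijective by counting. I would then define $x \circ y := M_y x$, where $M_y \in \C$ is the unique element with $M_y e = y$. Additivity of $\phi$ gives $M_{y+y'} = M_y + M_{y'}$, so $\circ$ is additive in $y$; linearity of each $M_y$ gives $\Fq$-linearity in $x$; and invertibility of nonzero $M_y$ rules out zero divisors. Thus $(\Fq^n, \circ)$ is a presemifield whose spread set is, by construction, $\{M_y : y\} = \C$.

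\textbf{Third assertion.} For the forward direction I would transport an isotopy. Writing $\C_1,\C_2$ for the spread sets of $(\Fq^n,\circ)$ and $(\Fq^n,\star)$ and unwinding $A(x\circ y) = B(x)\star C(y)$ as an identity of maps gives $A R^\circ_y = R^\star_{C(y)} B$, whence $R^\circ_y = A^{-1} R^\star_{C(y)} B$; as $C$ is a bijection this yields $\C_1 = A^{-1}\C_2 B$, the first form of equivalence. The transpose case reduces to the observation that the Knuth transpose of $\S$ has spread set $\{M^T : M \in \C(\S)\}$, so transpose-isotopy lands in the second form. The converse reverses each step: from $\C_1 = A \C_2 B$ one reads off the maps and reconstructs the isotopy, and similarly in the transposed case.

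The main obstacle is in this third assertion, namely reconciling the two notions of sameness. Isotopy is phrased with invertible additive maps, whereas code equivalence is phrased with $A,B \in \GL(n,q)$ together with a field automorphism $\rho \in \Aut(\Fq)$; the delicate point is to check that the additive maps appearing can be normalised to $\Fq$-semilinear ones, with the semilinear part absorbed into $\rho$, using that the spread-set elements are genuinely $\Fq$-linear (equivalently that $\Fq$ lies in a nucleus). Verifying that this normalisation is always available, and that it interacts correctly with the transpose/Knuth operation, is where the real work lies; the remaining steps are routine.
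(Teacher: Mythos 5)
The paper does not actually prove this theorem: it is quoted as ``well known'' with a pointer to \cite{LaPo2011}, so there is no in-paper argument to measure yours against. Your treatment of the first two assertions is the standard argument and is correct. The only remark worth making on the second is that additive closure of $\C$ gives only $\FF_p$-linearity in $y$, so the presemifield you construct is a priori a division algebra over the prime field whose right multiplications happen to be $\Fq$-linear (equivalently, $\Fq$ sits inside a nucleus); this is consistent with the paper's definition of a presemifield, which only requires a division algebra over \emph{some} finite field, but it is worth saying explicitly.

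For the third assertion your outline is the right one, and you have correctly located the single delicate point, but you have not closed it, and it is a genuine gap as written. Isotopy is defined with arbitrary invertible \emph{additive} (hence only $\FF_p$-linear) maps $A,B,C$, whereas code equivalence uses $A,B\in\GL(n,q)$ together with $\rho\in\Aut(\Fq)$, i.e.\ a pair of $\Fq$-semilinear maps with matching automorphisms. The direction ``equivalent $\Rightarrow$ isotopic'' is immediate, since $X\mapsto AX^{\rho}B$ equals $(A\rho)X(\rho^{-1}B)$ with both outer factors additive bijections. The converse is where the content lies. Two things should be noted. First, for the fields actually used in this paper ($q=2,3$) the gap is vacuous: additive bijections of $\Fq^n$ are exactly the elements of $\GL(n,q)$ and $\Aut(\Fq)$ is trivial, so your argument is already complete there. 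Second, for general $q$ the normalisation you ask for is true but not free: after right-multiplying each spread set by the inverse of one of its invertible elements one may assume the identity lies in both, which reduces the problem to $\C_1=A\,\C_2\,A^{-1}$ up to equivalence, and one must then show that an additive bijection conjugating one set of $\Fq$-linear maps onto another can be replaced by an $\Fq$-semilinear one; this is exactly the nucleus/idealiser analysis carried out in \cite{LaPo2011} and \cite{MaPoNuc}, and it should either be invoked or reproduced rather than left as an acknowledged obstacle.
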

%We denote a semifield spread set associated to the semifield $\S$ by $\C(\S)$.

MRD codes exist for all parameters, due to a construction of Delsarte \cite{Delsarte1978}; see Section \ref{sec:construct}. 

The problem of classifying semifields, or in bounding the number of {\it isotopy classes} of semifields, has been studied by many authors. Classifications have been performed for $2^4,2^5,2^6,3^4,3^5,5^4,7^4$. See below table for data and references. Futhermore Kantor \cite{Kantor2006} conjectured that the number of isotopy classes of semifields grows exponentially.

The problem of classifying MRD codes for other minimum distances has received much less attention, and thus there is less of a consensus on the ubiquity of such codes. A surprising recent result \cite{Hein2017} showed that in some cases, MRD codes can be rare; in particular, there is only one MRD code of minimum distance $3$ in $M_{4\times 4}(\FF_2)$, which is additive (and a Delsarte-Gabidulin code). For comparison, there are eight equivalence classes of MRD codes of minimum distance $4$ in $M_{4\times 4}(\FF_2)$, including three additive codes \cite{Knuth}, \cite{DempTP16}.

In this paper we show a theoretical result regarding subcodes of additive MRD codes over $\FF_2$ in Section \ref{sec:main}, which, given the known computational results on semifields, drastically reduces the amount of computation needed in order to classify additive MRD codes of minimum distance $n-1$ in $M_{n\times n}(\FF_2)$. In Section \ref{sec:comp} we present the results of some computations, which fully classify all additive MRD codes of minimum distance $n-1$ in $M_{n\times n}(\FF_2)$ for $n=5,6$. All examples turn out to be Delsarte-Gabidulin codes. Furthermore in Section \ref{sec:odd} we classify all additive MRD codes in $M_4(\FF_3)$; all examples turn out to contain a semifield spread set, and belong to one of three known families.

\subsection{Partial Spreads}

We recall some definitions from finite geometry, which will be crucial to our main result.

\begin{definition}
A {\it partial $t$-spread} of a vector space $V(N,q)$ is a set of $t$-dimensional subspaces of $V(N,q)$ which pairwise intersect in the trivial space of dimension zero.
\end{definition}

Clearly for any partial $t$-spread $\D$ of $V(N,q)$ we have that $|\D|\leq \frac{q^N-1}{q^t-1}$. In the case of equality, $\D$ is called a {\it spread}, and every nonzero vector of $V(N,q)$ appears in precisely one element of $\D$. Equality is possible if and only if $t$ divides $N$, a well-known result due to Segre \cite{Segre}. If $\D$ is not contained in any strictly larger partial $t$-spread, then $\D$ is said to be {\it maximal}. Note that some authors instead use the term {\it complete}.

It is well-known that spread sets, i.e. MRD codes in $M_n(\Fq)$ with minimum distance $n$, are in one-to-one correspondence with $n$-spreads of $V(2n,q)$. Furthermore, general MRD codes can be {\it lifted} to so-called {\it subspace codes}. However in this paper we will associate a partial spread to an MRD code in a different way in Section \ref{sec:mrdspread}.

\subsection{Subspace Codes}

Some of the recent interest in rank-metric coding is due to the following connection with subspace codes. We recall briefly this connection, and refer to \cite{SiKsKo2008} for further detail.

Given a matrix $A\in M_{m\times n}(\Fq)$, we can define an $m$-dimensional subspace
\[
U_A = \{(x,xA):x\in \FF_{q^m}\} \leq V(n+m,q).
\]
Then $\dim(U_A\cap U_B) = 2\rank(A-B)$, and thus MRD codes give rise to (constant dimension) subspace codes with high minimum distance; we refer to this as a {\it lifted MRD code}. In the case $m=n=d(\C)$, MRD codes (spread sets) are in one to one correspondence with spreads. If $d(\C)<n=m$, it is not necessarily true that MRD codes give best possible subspace codes; see for example \cite{EtzionLifted}, \cite{CoPa}. However, in \cite{Hein2017} it was shown that the lifted Delsarte-Gabidulin code in $M_4(\FF_2)$ with minimum distance $3$ is in fact the best possible subspace code with its parameters. This further illustrates the requirement for new results on binary MRD codes.

\section{Known Constructions}
\label{sec:construct}

The following are the known constructions for additive MRD codes in $M_n(\Fq)$. We note that there are other constructions for rectangular matrices, but we do not consider those here.

\begin{proposition}
Suppose $\sigma$ is an $\Fq$-automorphism of $\Fqn$, $\phi_1,\phi_2$ are two additive functions from $\Fqn$ to itself, and $k\leq n-1$. Let $\HH_k(\phi_1,\phi_2) $ be the set of $\Fq$-linear maps
\[
\HH_k(\phi_1,\phi_2) = \left\{x\mapsto \phi_1(a)x+\left(\sum_{i=1}^{k-1}f_i x^{\sigma^i}\right)+\phi_2(a)x^{\sigma^k}:a,f_i\in\Fqn\right\}\subset \End_{\Fq}(\Fqn)\simeq M_n(\Fq).
\]
If $N(\phi_1(a))\ne (-1)^{nk}N(\phi_2(a))$ for all $a\in \Fqn^*$, then $\HH_k(\phi_1,\phi_2)$ is an MRD code.
\end{proposition}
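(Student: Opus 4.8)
The plan is to verify the two defining conditions of an MRD code separately: first that $|\HH_k(\phi_1,\phi_2)| = q^{nk}$, and then that every nonzero element has rank at least $n-k+1$. With $m=n$ and $d=n-k+1$ these are precisely the requirements $|\C|=q^{n(m-d+1)}$ and $\rank\ge d$ of the definition, so together they give the result. Throughout I take $\sigma\colon x\mapsto x^{q^s}$ with $\gcd(s,n)=1$, so that $\sigma$ generates $\mathrm{Gal}(\Fqn/\Fq)$ and the standard theory of $\sigma$-polynomials applies. For the size, I would show the parametrisation $(a,f_1,\dots,f_{k-1})\mapsto \phi_1(a)x+\sum_i f_i x^{\sigma^i}+\phi_2(a)x^{\sigma^k}$ is injective. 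Two tuples yield the same $\sigma$-polynomial exactly when they agree in every $f_i$ and satisfy $\phi_1(a)=\phi_1(a')$, $\phi_2(a)=\phi_2(a')$; if $a\ne a'$ then additivity gives $\phi_1(b)=\phi_2(b)=0$ for $b=a-a'\ne 0$, whence $N(\phi_1(b))=0=(-1)^{nk}N(\phi_2(b))$, contradicting the hypothesis. So $a\mapsto(\phi_1(a),\phi_2(a))$ is injective and $|\HH_k(\phi_1,\phi_2)|=q^n\cdot q^{n(k-1)}=q^{nk}$.

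For the rank, fix a nonzero $F=\phi_1(a)x+\sum_i f_i x^{\sigma^i}+\phi_2(a)x^{\sigma^k}$ and recall the standard fact that a nonzero $\sigma$-polynomial whose nonzero terms lie between exponents $v$ and $w$ has $\Fq$-kernel of dimension at most $w-v$ (using $y\mapsto y^\sigma$ to strip the lowest $v$ terms, together with the degree bound on the kernel). If $\phi_2(a)=0$ then $F$ has $\sigma$-degree $\le k-1$, and if $\phi_1(a)=0$ then its lowest exponent is $\ge 1$; in either case $\dim_{\Fq}\ker F\le k-1$, i.e. $\rank F\ge n-k+1$. The only remaining case is $\phi_1(a)\phi_2(a)\ne 0$, where both end coefficients are nonzero and the kernel could a priori reach the maximal dimension $k$. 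Ruling this out is the crux, and it is exactly where the norm hypothesis is needed.

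This reduces everything to the following norm relation, which I regard as the main obstacle: if $F=\sum_{i=0}^k c_i x^{\sigma^i}$ with $c_0c_k\ne 0$ has $\Fq$-kernel $U$ of dimension exactly $k$, then $N(c_0)=(-1)^{nk}N(c_k)$, where $N=N_{\Fqn/\Fq}$. I would prove it inside the Ore ring $\Fqn[x;\sigma]$: the $\sigma$-polynomials vanishing on $U$ form a principal left ideal generated by the monic minimal annihilator $M_U$, whose $\sigma$-degree equals $\dim_{\Fq}U=k$; since $F$ also has $\sigma$-degree $k$ and vanishes on $U$, a degree count forces $F=c_k M_U$. The constant term of $M_U$ is then read off from the Dickson/Moore determinant of a basis $u_1,\dots,u_k$ of $U$: expanding the $(k+1)\times(k+1)$ determinant with rows indexed by $\sigma^0,\dots,\sigma^k$ along the variable row gives $c_0/c_k=(-1)^k\Delta^\sigma/\Delta$, where $\Delta=\det(u_j^{\sigma^i})_{0\le i\le k-1}\ne 0$ by $\Fq$-independence of the $u_j$. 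Taking norms finishes the argument: since $N$ is invariant under $\sigma$ (as $N(y^{q^s})=N(y)^{q^s}=N(y)$ for $N(y)\in\Fq$) we get $N(\Delta^\sigma/\Delta)=1$, and $N((-1)^k)=(-1)^{nk}$ in $\Fq$ in every characteristic, so $N(c_0)=(-1)^{nk}N(c_k)$. Applying this with $c_0=\phi_1(a)$ and $c_k=\phi_2(a)$ contradicts the standing hypothesis $N(\phi_1(a))\ne(-1)^{nk}N(\phi_2(a))$; hence the kernel cannot have dimension $k$, and $\rank F\ge n-k+1$ in all cases, completing the verification that $\HH_k(\phi_1,\phi_2)$ is MRD.
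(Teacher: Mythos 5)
Your proof is correct, but there is nothing in the paper to compare it against: the proposition sits in the ``Known Constructions'' section and is stated without proof, justified only by the citations (Delsarte, Sheekey, Lunardon--Trombetti--Zhou, Otal--\"Ozbudak). What you have written is essentially a self-contained reconstruction of the standard argument from that literature. Both halves check out: the counting step correctly uses the norm hypothesis to force $a\mapsto(\phi_1(a),\phi_2(a))$ to be injective, and the rank step correctly reduces to the key lemma that a $\sigma$-polynomial $\sum_{i=0}^k c_i x^{\sigma^i}$ with $c_0c_k\ne 0$ and kernel of dimension $k$ must satisfy $N(c_0)=(-1)^{nk}N(c_k)$; your Ore-ring derivation of this via $F=c_kM_U$ and the Moore-determinant expansion is sound, and the sign bookkeeping ($c_0/c_k=(-1)^k\Delta^\sigma/\Delta$, $N((-1)^k)=(-1)^{nk}$ in every characteristic) is right. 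Two remarks. First, you correctly insert the hypothesis $\sigma\colon x\mapsto x^{q^s}$ with $\gcd(s,n)=1$, which the proposition as printed omits but which is necessary (the statement fails for $\sigma=\mathrm{id}$) and is implicit in the paper's table; good catch. Second, the ``standard fact'' that $\dim_{\Fq}\ker F$ is bounded by the $\sigma$-degree is, for $s>1$, \emph{not} the naive root-count bound (that only gives $\le s\cdot\deg$); it follows from the same principal-left-ideal argument you deploy later ($F=QM_{\ker F}$ forces $\deg F\ge \deg M_{\ker F}=\dim_{\Fq}\ker F$), so you should either cite it as a consequence of that machinery or as the generalized Moore/Gow--Quinlan lemma rather than as a degree bound.
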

To date this result has been used to construct the following. Here $s$ is any integer relatively prime to $n$. Note that what we designate as Delsarte-Gabidulin codes include the generalised Gabidulin codes; we do not distinguish between them here. 
\begin{center}
\begin{tabular}{|c|c|c|c|c|c|c|}
\hline
Name &$\sigma$&$\phi_1(a)$&$\phi_2(a)$&Conditions&Reference\\
\hline
DG&$q^s$&$a$&$0$&$-$&\cite{Delsarte1978}, \cite{GGab}\\
%TG&$q$&$a$&$\eta a^{q^h}$&$N(\eta)\ne(-1)^{nk}$&GTF&\cite{SheekeyMRD}\\
%GTG&$q^s$&$a$&$\eta a^{q^h}$&$N(\eta)\ne(-1)^{nk}$&GTF&\cite{LuTrZh2015}\\
TG&$q^s$&$a$&$\eta a^{p^h}$&$N_p(\eta)\ne(-1)^{nk}$&\cite{SheekeyMRD},\cite{LuTrZh2015},\cite{Ozbudak1}\\
TZ&$q^s$&$a_0$&$\eta a_1$&$n$ even, $N(\eta)\notin \Box$&\cite{TrZhHughes}\\
\hline
\end{tabular}
\end{center}
DG = Delsarte-Gabidulin, TG=Twisted Gabidulin, TZ = Trombetti-Zhou
%GTG=Generalised Twisted Gabidulin, AGTG=Additive Generalised Twisted Gabidulin. 

Further recent constructions in the case $m=n\in \{6,7,8\}$ can be found in \cite{CsMaPoZa}, \cite{CsMaZu}, \cite{CsMaPoZh}. A construction generalising the Twisted Gabidulin codes to various new parameters (that is, new orders of nuclei/idealisers) can be found in \cite{SheekeySkew}; these codes lie in $M_n(\FF_{q^s})$ for some $s>1$, and thus do not give examples in the cases considered in this paper. 

There are many constructions for nonlinear MRD codes, for example \cite{CoMaPa}; we do not list them all here, as we do not prove any results on this topic.

Note that the only constructions which are valid in $M_n(\FF_2)$ are the Delsarte-Gabidulin codes. We will see shortly that for small values of $n$, no other codes exist over $\FF_2$.  

\section{Known Classifications}
\subsection{Square matrices}

Dickson \cite{Dickson1905} was the first to show the existence of proper finite semifields. Furthermore he showed that proper semifields must be at least three-dimensional algebras.
\begin{theorem}[Dickson]
Every finite semifield of order $q^2$ with centre $\Fq$ is isotopic to a field.
\end{theorem}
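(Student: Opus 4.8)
The plan is to show directly that any such semifield $\S$ must in fact be a field isomorphic to $\Fqt$, from which isotopy to a field is immediate. Write $\S=(\Fq^2,\circ)$ and let $e$ denote its multiplicative identity. Since the centre of $\S$ is $\Fq$ and the identity always lies in the centre, we may identify $\Fq$ with $\Fq e$; because $\Fq$ is central, the multiplication $\circ$ is $\Fq$-bilinear, and $\S$ is a $2$-dimensional algebra over $\Fq$. First I would choose any element $x\in\S\setminus\Fq$; as $\Fq e$ is one-dimensional and $\dim_{\Fq}\S=2$, the set $\{e,x\}$ is an $\Fq$-basis of $\S$.

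The heart of the argument is a single structure-constant computation. Since $x\circ x\in\S$, there are unique $a,b\in\Fq$ with $x\circ x = ae+bx$. Using $\Fq$-bilinearity of $\circ$ together with the fact that $e$ is the identity, I would expand the general product
\[
(\alpha e+\beta x)\circ(\gamma e+\delta x) = (\alpha\gamma + a\beta\delta)\,e + (\alpha\delta+\beta\gamma+b\beta\delta)\,x,
\]
for $\alpha,\beta,\gamma,\delta\in\Fq$. This expression is symmetric under interchanging the two factors, and it coincides with multiplication in the quotient ring $\Fq[X]/(X^2-bX-a)$ under the identification $e\mapsto 1$, $x\mapsto X$. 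Consequently $\S$ is automatically commutative and associative, inheriting these properties from the polynomial ring.

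To finish, I would observe that $\S$, being a semifield, has no zero divisors, so the quadratic $X^2-bX-a$ cannot factor over $\Fq$; hence it is irreducible and $\S\cong\Fq[X]/(X^2-bX-a)\cong\Fqt$ is the field of order $q^2$. In particular $\S$ is isotopic to a field. The only point requiring care---and the step where the centrality hypothesis is indispensable---is the reduction to $\Fq$-bilinearity: it is precisely because $\Fq$ is central that the product is determined by the single element $x\circ x$, so that the potential failures of commutativity and associativity available to a general two-dimensional nonassociative algebra simply cannot occur here. No genuine obstacle remains beyond verifying this expansion, which is routine.
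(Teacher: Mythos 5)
Your argument is correct, and it is worth noting that the paper offers no proof of this statement at all: it is quoted as a classical result of Dickson with only the citation \cite{Dickson1905}, so there is no in-paper argument to compare against. Your structure-constant computation is the standard route and in fact establishes more than the statement asks for, namely that $\S$ is \emph{isomorphic} to $\Fqt$, not merely isotopic to a field. The key steps all check out: centrality of $\Fq$ gives $\Fq$-bilinearity of $\circ$, so the whole multiplication table on the basis $\{e,x\}$ is determined by the single product $x\circ x=ae+bx$; the resulting structure constants are those of $\Fq[X]/(X^2-bX-a)$, which forces commutativity and associativity; and absence of zero divisors forces $X^2-bX-a$ to be irreducible. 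One small point of hygiene: read the hypothesis "with centre $\Fq$" as "with $\Fq$ contained in the centre" (the usual formulation), since your conclusion shows the centre of $\S$ is actually all of $\Fqt$; your proof only ever uses centrality of $\Fq$, so it is correct under the intended reading.
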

Paraphrasing this into the language of MRD codes gives the following.
\begin{corollary}
Every $\Fq$-linear MRD code $\C\subset M_2(\Fq)$ is equivalent to a Delsarte-Gabidulin code. If $d(\C)=2$ then $\C$ is equivalent to $\C(\FF_{q^2})$. If $d(\C)=1$, then $\C$ is the trivial code $M_2(\Fq)$.
\end{corollary}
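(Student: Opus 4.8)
The plan is to split on the minimum distance, which for $n=m=2$ can only be $d=1$ or $d=2$, and to dispatch the first case by counting. If $d(\C)=1$ then the MRD cardinality condition gives $|\C|=q^{n(m-d+1)}=q^{4}=|M_2(\Fq)|$, and since $\C\subseteq M_2(\Fq)$ we conclude $\C=M_2(\Fq)$, the trivial code. So all of the content is in the case $d(\C)=2$.

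For $d(\C)=2$ we have $|\C|=q^2$ and every nonzero element of $\C$ is invertible; since $\C$ is $\Fq$-linear it is a $2$-dimensional $\Fq$-subspace of $M_2(\Fq)$ all of whose nonzero elements lie in $\GL(2,q)$. The cleanest route, which I would take, is direct. Right-multiplying $\C$ by the inverse of any fixed nonzero element (an equivalence) we may assume $I\in\C$, and we write $\C=\Fq I\oplus\Fq M$ for some $M\notin\Fq I$. If the characteristic polynomial of $M$ had a root $\mu\in\Fq$, then $M-\mu I$ would be a nonzero singular element of $\C$, contradicting invertibility; hence this polynomial is irreducible over $\Fq$ and $\C=\Fq[M]\cong\FF_{q^2}$ is literally a copy of the field. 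A further conjugation carrying $\Fq[M]$ to the standard embedding of $\FF_{q^2}$ in $M_2(\Fq)$ shows that $\C$ is equivalent to $\C(\FF_{q^2})$, which is the Delsarte-Gabidulin code of minimum distance $2$. This establishes both assertions.

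It is worth recording the intended paraphrase of Dickson's theorem, and where its difficulty lies. By the theorem relating additive MRD codes of minimum distance $n$ to presemifields, the $d=2$ code is $\C(\S)$ for a presemifield $\S$ of order $q^2$; the $\Fq$-linearity of $\C$ places the scalar field $\Fq I$ in both idealisers $\cI_\ell(\C)$ and $\cI_r(\C)$, so through the idealiser-nucleus dictionary $\S$ is isotopic to a semifield of order $q^2$ whose centre contains $\Fq$. Dickson's theorem then forces $\S$ to be isotopic to $\FF_{q^2}$, and the equivalence part of the presemifield theorem gives $\C$ equivalent to $\C(\FF_{q^2})$. The step I expect to be the genuine obstacle in this route is precisely the translation of the code-theoretic hypothesis ``$\Fq$-linear'' into the algebraic hypothesis ``centre contains $\Fq$'', together with verifying that the normalisation to an honest semifield preserves this property; the elementary argument above is attractive exactly because it sidesteps this bookkeeping and reproves Dickson's theorem in this smallest case.
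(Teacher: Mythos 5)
Your proof is correct, but it takes a different route from the paper. The paper offers no independent argument for this corollary: it is stated as a direct ``paraphrase'' of Dickson's theorem that every finite semifield of order $q^2$ with centre $\Fq$ is isotopic to a field, exactly the citation route you sketch in your second paragraph (and the paper treats Menichetti's theorem for $M_3(\Fq)$ the same way). Your first argument instead reproves Dickson's theorem in this smallest case from scratch: the counting disposal of $d=1$ is right, and for $d=2$ the normalisation to $I\in\C$, the observation that a root $\mu\in\Fq$ of the characteristic polynomial of $M$ would make $M-\mu I$ a nonzero singular element, and the conclusion $\C=\Fq[M]\cong\FF_{q^2}$ are all sound. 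The only step you assert without justification is that any embedded copy of $\FF_{q^2}$ in $M_2(\Fq)$ is conjugate to the standard one; this is standard (Skolem--Noether, or: replace $M$ by a generator of $\Fq[M]$ whose irreducible characteristic polynomial matches that of the standard generator, and note that two $2\times 2$ matrices with the same irreducible characteristic polynomial are similar to the same companion matrix). What your approach buys is self-containment and an explicit handle on the equivalence, sidestepping the translation between $\Fq$-linearity of the code and the centre/nucleus conditions on the semifield; what the paper's approach buys is brevity and consistency with its general strategy of importing semifield classifications wholesale into the MRD language.
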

Additive MRD codes in $M_2(\Fq)$ which are not $\Fq$-linear correspond to proper semifields two-dimensional over a nucleus. Classification has been performed for the case $q=p^2$ for $p$ prime, and partial classification for the case $q=p^3$. There are various constructions known, we refer to \cite{LaPo2011}.

The following result is due to Menichetti \cite{Menichetti1977}.
\begin{theorem}[Menichetti]
Every finite semifield of order $q^3$ with centre $\Fq$ is isotopic to a field or a generalised twisted field.
\end{theorem}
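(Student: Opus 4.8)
The plan is to work entirely with the semifield spread set. Since every presemifield is isotopic to a semifield (as recalled above), we may assume $S=(\Fqth,\circ)$ has an identity, and by the correspondence between semifields and additive MRD codes of minimum distance $n$ it suffices to study the $3$-dimensional $\Fq$-space $\C=\C(S)\subset M_3(\Fq)$ in which every nonzero element is invertible. The first step is a reduction using the nuclei. Each nucleus is a field $N$ with $\Fq\subseteq N\subseteq S$ over which $S$ is a vector space, so $\dim_\Fq S=\dim_N S\cdot[N:\Fq]$ forces $[N:\Fq]\mid 3$; hence every nucleus equals either $\Fq$ or all of $S$. If some nucleus equals $S$, then $S$ is associative, so by Wedderburn's little theorem $S$ is a field and we are done. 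Thus we may assume that all three nuclei coincide with the centre $\Fq$.

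Next I would pass to the \emph{determinant cubic}. Restricting $\det\colon M_3(\Fq)\to\Fq$ to $\C\cong\Fq^3$ yields a ternary cubic form $N$, and the hypothesis that every nonzero element of $\C$ is invertible says exactly that $N$ is \emph{anisotropic}: its associated projective plane cubic $\mathcal{C}\colon N=0$ has no $\Fq$-rational points. The key geometric input is then a classification of such forms. A smooth plane cubic is a curve of genus one, so by the Hasse--Weil bound it carries at least $q+1-2\sqrt{q}=(\sqrt q-1)^2>0$ rational points; hence $\mathcal{C}$ cannot be smooth. Any $\Fq$-rational linear component, or an $\Fq$-rational singular point of an irreducible singular cubic (which is unique and hence Galois-fixed), would likewise produce a rational point, which is excluded. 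The only surviving possibility is that $N$ is, up to a scalar and an $\Fq$-linear change of coordinates, the norm form of the cubic extension $\Fqth/\Fq$, i.e. $\mathcal{C}$ splits over $\Fqth$ into three Galois-conjugate lines meeting only at a point with no $\Fq$-coordinates. This identifies the determinant form of $\C$ with that of the field.

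Finally I would reconstruct the multiplication. Knowing only $N$ is not enough, since many inequivalent semifields can share a given norm form; the real content is a \emph{rigidity} statement, that once the nuclei are trivial and the norm form is that of $\Fqth$, the multiplication tensor is forced into the shape of a generalised twisted field, $x\circ y=xy-c\,x^{q^{i}}y^{q^{j}}$, with $c$ constrained by the anisotropy (non-norm) condition. I would try to obtain this by writing the spread set as $\C=\{x\mapsto \alpha x+\beta x^{q}+\gamma x^{q^{2}}\}$ in $\End_{\Fq}(\Fqth)$, imposing both that the nuclei are trivial and that $\det$ agrees with the norm on the whole space, and then extracting the admissible coefficient patterns. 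This last step is the main obstacle: it is precisely the delicate algebraic case analysis at the heart of Menichetti's resolution of Kaplansky's conjecture, where the interplay between the semilinear coefficients $\alpha,\beta,\gamma$ and the anisotropy condition must be controlled globally rather than one zero of $N$ at a time, and I would expect the bulk of the effort, including any appeal to counting or to the structure of the autotopism group, to be concentrated here.
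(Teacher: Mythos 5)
The paper does not prove this statement at all: it is quoted as a known theorem of Menichetti, with the proof residing in \cite{Menichetti1977}. So the comparison can only be between your proposal and the actual published argument.

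Your first two steps are sound and are indeed part of the standard approach. The nucleus reduction ($[N:\Fq]\mid 3$, Wedderburn when a nucleus is all of $S$) is correct, and the classification of anisotropic ternary cubic forms over $\Fq$ as (a scalar multiple of) the norm form of $\Fqth/\Fq$ is a true and classical fact; your case analysis (Hasse--Weil for the smooth case, the Galois-fixed singular point for an irreducible singular cubic, rational linear components, and the triangle of three conjugate lines in the remaining case) is essentially how one proves it. But the proposal then stops exactly where Menichetti's theorem begins. Knowing that $\det|_{\C}$ is the norm form is a very weak constraint: it is the statement that $\C$ and $\C(\FF_{q^3})$ have the same rank distribution, which every $3$-dimensional anisotropic subspace of $M_3(\Fq)$ automatically satisfies, and it does not by itself pin down the multiplication tensor. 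The entire content of the theorem --- that writing $\C=\{x\mapsto \alpha x+\beta x^q+\gamma x^{q^2}\}$ over a $3$-dimensional $\Fq$-subspace of coefficient triples and imposing anisotropy forces the code, up to isotopy, into the twisted-field shape $x\circ y = xy-cx^{q^i}y^{q^j}$ --- is the step you explicitly defer (``I would expect the bulk of the effort\dots to be concentrated here''). That step is a long and delicate global analysis (Menichetti's resolution of Kaplansky's conjecture runs to a substantial case study of the coefficient functions $\alpha(y),\beta(y),\gamma(y)$ and the autotopism group), and no mechanism for carrying it out is supplied. As it stands the proposal is a correct reduction plus an accurate description of what remains to be proved, not a proof; the gap is precisely the theorem itself.
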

Paraphrasing this into the language of MRD codes, and exploiting the duality result of Delsarte, gives the following.
\begin{corollary}
Every $\Fq$-linear MRD code in $M_3(\Fq)$ is equivalent to a Delsarte-Gabidulin code.
\end{corollary}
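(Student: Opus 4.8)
The plan is to split the argument according to the minimum distance $d=d(\C)\in\{1,2,3\}$, since an $\Fq$-linear MRD code in $M_3(\Fq)$ must have size $q^{3(3-d+1)}$, giving $|\C|\in\{q^9,q^6,q^3\}$. In each case the target is the corresponding Delsarte--Gabidulin code: the full space $M_3(\Fq)$ when $d=1$, the $k=2$ Gabidulin code when $d=2$, and the field spread set $\C(\Fqth)$ when $d=3$. The case $d=1$ is immediate, since $|\C|=q^9=|M_3(\Fq)|$ forces $\C=M_3(\Fq)$, which is the trivial Delsarte--Gabidulin code.

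For $d=3$ I would invoke the stated correspondence between additive MRD codes of minimum distance $n$ and presemifields: here $\C=\C(\S)$ for some presemifield $\S$ of order $q^3$, which is isotopic to a semifield, and the $\Fq$-linearity places $\Fq$ in the relevant nucleus so that Menichetti's theorem applies. Thus $\S$ is isotopic either to $\Fqth$, in which case $\C$ is equivalent to $\C(\Fqth)$, a Gabidulin code; or to a generalised twisted field. The entire weight of the $d=3$ case therefore falls on showing that the generalised twisted field alternative does not produce a genuinely new equivalence class. Using the stated fact that two presemifields have equivalent spread sets precisely when they are isotopic or transpose-isotopic, this amounts to analysing the Knuth orbit and the idealisers (nuclei) of an order-$q^3$ generalised twisted field and checking that, under the linearity hypothesis, it is forced to be transpose-isotopic to $\Fqth$; equivalently, that the hypothesis excludes the proper twisted fields by demanding that an idealiser contain a copy of $\Fqth$.

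The case $d=2$ I would reduce to $d=3$ by Delsarte duality. With respect to the trace form $\langle A,B\rangle=\Tr(AB^T)$, the dual $\C^\perp$ of an MRD code of minimum distance $d$ in $M_n(\Fq)$ is again MRD, of minimum distance $n-d+2$; for $n=3$ this sends $d=2$ to $d=3$. Since $\C^\perp$ is again $\Fq$-linear and its idealisers are obtained from those of $\C$ by transposition (with $\cI_\ell$ and $\cI_r$ interchanged), the linearity hypothesis is preserved, and the $d=3$ case gives that $\C^\perp$ is equivalent to $\C(\Fqth)$. I would then use that the Delsarte dual of a Gabidulin code is again a Gabidulin code, and that duality carries equivalent codes to equivalent codes, to conclude that $\C$ itself is equivalent to the minimum-distance-$2$ Gabidulin code.

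The main obstacle is the generalised twisted field case for $d=3$: one must rule out that such a field, satisfying the linearity hypothesis, gives a code inequivalent to every Gabidulin code. I would attack this by computing the nuclei of the order-$q^3$ generalised twisted fields and showing that requiring a full nucleus isomorphic to $\Fqth$ leaves only $\Fqth$ itself. The surrounding bookkeeping---that Delsarte duality preserves $\Fq$-linearity and transposes the idealiser structure, and that it fixes the class of Gabidulin codes---is routine by comparison.
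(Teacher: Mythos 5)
Your overall architecture --- $d=1$ trivial, $d=3$ via Menichetti, $d=2$ by Delsarte duality from the $d=3$ case --- is exactly the route the paper intends; the paper gives no more detail than ``paraphrasing Menichetti and exploiting Delsarte duality'', so your bookkeeping (duality sends $d$ to $n-d+2$, preserves linearity and equivalence, and fixes the Gabidulin family) is a faithful expansion of it. However, you have correctly located the one genuine difficulty, and your proposed way around it does not work, so the gap you flag remains open.

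The problem is the generalised twisted field branch of Menichetti's theorem. Reading ``$\Fq$-linear'' in parallel with the Dickson corollary (the code is an $\Fq$-subspace, equivalently the associated semifield has centre containing $\Fq$), proper generalised twisted fields of order $q^3$ with centre $\Fq$ do exist for every $q\geq 3$: take $x\circ y = xy-cx^{q}y^{q^2}$ on $\Fqth$ with $N(c)\neq 1$. All of its nuclei equal $\Fq$, so it is not isotopic or transpose-isotopic to $\Fqth$ (nuclei orders are isotopy invariants), and its spread set is a twisted Gabidulin code with $k=1$, hence an $\Fq$-linear MRD code with $d=3$ that is \emph{not} equivalent to the unique Delsarte--Gabidulin code $\C(\Fqth)$ with those parameters; dualising produces the same phenomenon at $d=2$. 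Your suggested repair --- read the hypothesis as demanding an idealiser containing $\Fqth$ --- would exclude these examples, but it proves far too much: a semifield of order $q^3$ whose nucleus is all of $\Fqth$ is associative, hence a field, so under that reading the $d=3$ case is immediate and Menichetti is never invoked, which cannot be what a corollary of Menichetti's theorem intends. What Menichetti plus duality actually yields is ``equivalent to a Delsarte--Gabidulin or twisted Gabidulin code'' (the latter coming from the generalised twisted fields), and no nucleus computation will collapse the second family into the first; your plan should either target that weaker conclusion or state explicitly the nonstandard reading of the hypothesis it relies on.
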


%There are ?? semifields of order $64$ containing a nucleus of order $4$. From this it can be show that there are ?? equivalence classes of additive MRD codes in $M_3(\FF_4)$ with minimum distance $3$, and by duality, also ?? equivalence classes of additive MRD codes in $M_3(\FF_4)$ with minimum distance $2$.

The smallest open cases are $\FF_3$-subspaces of $M_3(\FF_9)$ of minimum distance $3$; $\FF_3$-subspaces of $M_2(\FF_{27})$ of minimum distance $3$; $\FF_3$-subspaces of $M_4(\FF_3)$ of minimum distance $3$; $\FF_2$-subspaces of $M_5(\FF_2)$ and $M_6(\FF_2)$. We will complete the last two of these in this paper.

The following table lists the previously known computer classifications for MRD codes of square matrices. New results in this category can be found in Section \ref{sec:summary}. Here we list the available data in terms of isotopy classes and Knuth orbits; later we will write in terms of equivalence as defined above.

\begin{center}
\begin{tabular}{|c|c|c|c|c|}
\hline
$n \times m$ &$q$&$d$&\#Isotopy Classes&Reference\\
&&& (Knuth Orbits)&\\
\hline
%$3\times 4$&2&$3$&7&\cite{Kurz}\\
%$3\times 4$&3&$3$&43&New\\
%\hline
$4\times 4$&2&$4$&3(3)&\cite{Knuth}\\
$4\times 4$&2&$3$&1&\cite{SheekeyMRD},\cite{Hein2017}\\
$4\times 4$&3&$4$&27 (12)&\cite{DempSem81}\\
$4\times 4$&4&$4$&(28)&\cite{Rua2011a}\\
$4\times 4$&5&$4$&(42)&\cite{Rua2011a}\\
$4\times 4$&7&$4$&(120)&\cite{Rua2012}\\
\hline
$5\times 5$&2&$5$&6 (3)&\cite{Walker}\\
$5\times 5$&3&$5$&23 (9)&\cite{Rua2011b}\\
\hline
$6\times 6$&2&$6$&332 (80)&\cite{Rua2009}\\
\hline
\end{tabular}
\end{center}

\subsection{Rectangular matrices}

The problem of classifying $\Fq$-linear MRD codes of rectangular matrices with maximal possible minimum distance can be translated into classifying quasi-MRD codes of square matrices. We do not claim that this result is new, but we include a proof for completeness. The idea is essentially that of \cite{MLIrsee}, \cite{LaSh233}.

\begin{theorem}
\label{thm:tensor}
Equivalence classes of $\Fq$-linear MRD codes in $M_{m\times n}(\Fq)$ with minimum distance $m$ are in one-to-one correspondence with equivalence classes (excluding transposition) of $m$-dimensional subspaces of $M_{n\times n}(\Fq)$ in which every nonzero element is invertible.
\end{theorem}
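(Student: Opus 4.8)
The plan is to realise both sides as two different flattenings of a single three-fold tensor, reducing the theorem to the observation that the MRD condition and the all-invertible condition are literally the same non-degeneracy statement about an associated bilinear map. Note first that an $\Fq$-linear MRD code $\C\subset M_{m\times n}(\Fq)$ of minimum distance $m$ forces $m\leq n$ (from the rank bound) and $\dim_{\Fq}\C=n$ (from $|\C|=q^{n(m-d+1)}=q^{n}$). I would fix a basis $C_1,\dots,C_n$ of $\C$ and package it as the bilinear map $\beta:\Fq^{m}\times\Fq^{n}\to\Fq^{n}$, $\beta(w,\lambda)=\sum_{i=1}^{n}\lambda_i\,(wC_i)$, equivalently as the tensor $\sum_{a,b,i}(C_i)_{a,b}\,e_a\otimes f_b\otimes g_i\in\Fq^{m}\otimes\Fq^{n}\otimes\Fq^{n}$. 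Contracting the last factor recovers the code, giving $\beta(w,\lambda)=wM(\lambda)$ with $M(\lambda)=\sum_i\lambda_iC_i\in\C$; contracting the first factor gives, for each $w\in\Fq^{m}$, a matrix $N(w)\in M_n(\Fq)$ with $\beta(w,\lambda)=\lambda^{T}N(w)$. The candidate object on the square side is $\C':=\{N(w):w\in\Fq^{m}\}$.

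The heart of the argument is the chain of equivalences
\[
\rank M(\lambda)=m\ \text{ for all }\lambda\neq0 \iff \beta(w,\lambda)\neq0\ \text{ for all }w,\lambda\neq0 \iff N(w)\in\GL(n,q)\ \text{ for all }w\neq0 .
\]
The first step is just the restatement that $M(\lambda)$ has full row rank exactly when no nonzero left-combination of its rows vanishes; the second is the restatement that $N(w)$ is nonsingular exactly when no nonzero $\lambda$ lies in its left kernel. The same non-degeneracy also forces $w\mapsto N(w)$ to be injective (an $N(w)=0$ with $w\neq0$ would violate the middle condition), so $\C'$ is genuinely $m$-dimensional and every nonzero element of $\C'$ is invertible. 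Because the construction is symmetric in the two $n$-dimensional factors, running it backwards from any $m$-dimensional all-invertible subspace of $M_n(\Fq)$ returns an MRD code of minimum distance $m$; hence the assignment $\C\mapsto\C'$ is a bijection at the level of individual objects.

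The remaining, and most delicate, task is to show that this bijection descends to equivalence classes and to locate the role of transposition; I expect this bookkeeping to be the main obstacle. I would track how the three factors of the tensor transform: left multiplication $X\mapsto AX$ by $A\in\GL(m,q)$ on $\C$ merely reindexes $w$ and leaves $\C'$ fixed; right multiplication $X\mapsto XB$ by $B\in\GL(n,q)$ acts as right multiplication on both $\C$ and $\C'$; a change of basis $C_i\mapsto\sum_j P_{ij}C_j$ of $\C$ leaves $\C$ as a set unchanged while acting as the left multiplication $N(w)\mapsto PN(w)$ on $\C'$; and a field automorphism $\rho\in\Aut(\Fq)$ acts entrywise on both. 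Matching these actions shows that $\C_1,\C_2$ are equivalent via $\{AX^{\rho}B\}$ precisely when $\C'_1,\C'_2$ are equivalent via $\{PY^{\rho}B\}$. The one symmetry with no counterpart is transposition of $M_n(\Fq)$: it interchanges the two $n$-dimensional factors, of which one is the ``matrix'' factor shared by both constructions and the other is the index factor of $\C$, so it is not induced by any equivalence available on $M_{m\times n}(\Fq)$ (indeed for $m<n$ the code cannot be transposed within its ambient space at all). This is exactly why transposition must be excluded on the square-matrix side, and I would record it as the final step completing the one-to-one correspondence.
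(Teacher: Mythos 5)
Your proposal is correct and follows essentially the same route as the paper: both realise the code and the square all-invertible subspace as two contractions of a single tensor in $\Fq^m\otimes\Fq^n\otimes\Fq^n$ and observe that the MRD condition and the all-invertible condition are the same nondegeneracy statement about the associated trilinear form. Your explicit bookkeeping of how $A$, $B$, the basis change $P$, the field automorphism, and transposition act on the three tensor factors fills in the step the paper dispatches with a one-line appeal to tensor equivalence, but it is the same argument.
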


\begin{proof}
Given an $n$-dimensional $\Fq$-subspace $\C$ of $M_{m\times n}(\Fq)$, we can define an $m\times n\times n$ tensor, or equivalently a trilinear form, as follows. Choose a basis $\{E_1,\cdots,E_n\}$ for $\C$, and define a trilinear form
\[
T(w,u,v) = w\left(\sum_i v_i E_i\right)u^T.
\]
 As $\C$ is an MRD code, this implies that for all $0\ne a\in \Fq^m, 0\ne c\in \Fq^n$, the map $u \mapsto T(a,u,c)$ is not identically zero. 

Consider now the set of bilinear forms $T_a:(u,v)\mapsto T(a,u,v)$, which can be viewed as an $m$-dimensonal subspace of $M_n(\Fq)$. Then each $T_a$ is invertible, for otherwise there would exists $0\ne c\in \Fq^n$ with $T(a,u,c)=0$ for all $u\in \Fq^n$, a contradiction. Hence $\{T_a:a\in \Fq^m\}$ gives rise to an $m$-dimensional subspace of $M_n(\Fq)$ in which every nonzero element is invertible. The converse is similar.

The equivalence follows easily from the definition of equivalence of a tensor; see for example \cite{MLIrsee}.
\end{proof}

Classification of $3$-dimensional $\Fq$-subspaces of $M_{2\times 3}(\Fq)$ was done in \cite{LaSh233}. This implies the following result.
\begin{theorem}[\cite{LaSh233}]
There is precisely one equivalence class of MRD codes in $M_{2\times 3}(\FF_q)$ with minimum distance $2$.
\end{theorem}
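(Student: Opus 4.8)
The plan is to combine Theorem~\ref{thm:tensor} with the earlier corollaries on low-dimensional semifields, reducing the rectangular classification to a square-matrix problem that is already settled. By Theorem~\ref{thm:tensor}, equivalence classes of $\Fq$-linear MRD codes in $M_{2\times 3}(\Fq)$ with minimum distance $2$ are in one-to-one correspondence (excluding transposition) with equivalence classes of $2$-dimensional $\Fq$-subspaces of $M_{3\times 3}(\Fq)$ in which every nonzero element is invertible. So the task becomes: classify the $2$-dimensional subspaces of $M_3(\Fq)$ consisting entirely of invertible matrices (together with $0$).

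First I would interpret such a $2$-dimensional subspace $\langle A,B\rangle$ geometrically. After multiplying on one side by $A^{-1}$ (a move that only changes the subspace within its equivalence class), we may assume the subspace contains the identity, so it has the form $\{\lambda I + \mu M : \lambda,\mu\in\Fq\}$ for some fixed $M$, and the invertibility condition says that $\lambda I + \mu M$ is nonsingular for all $(\lambda,\mu)\ne(0,0)$. Equivalently, $M$ has no eigenvalue in $\Fq$; more precisely the characteristic polynomial of $M$ has no root in $\Fq$, and in fact the pencil $\det(\lambda I + \mu M)$ must be anisotropic as a binary form over $\Fq$. Since $M$ is a $3\times 3$ matrix, its characteristic polynomial is a cubic, which always has a root in $\Fq$ when... — here I would need to be careful, because a cubic over $\Fq$ always factors off at least one linear factor only if it has a root, and indeed every cubic over a finite field either is irreducible or has a root. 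This is the crux.

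The key step, and the one I expect to be the main obstacle, is ruling out configurations and matching the surviving ones to a known structure. The natural route is to observe that a $2$-dimensional subspace of invertible matrices in $M_3(\Fq)$ generates (under addition and the algebra it spans) a structure that must embed into a field: the condition that $\lambda I + \mu M$ is always invertible forces the minimal polynomial of $M$ to be irreducible of degree dividing $3$, hence the $\Fq$-algebra $\Fq[M]$ is a field, necessarily $\Fq[M]\cong\Fqth$ acting on $\Fq^3$. One then checks that up to equivalence there is a unique such action, giving a unique subspace $\{a + b\alpha : a,b\in\Fq\}$ sitting inside a copy of $\Fqth$, which corresponds precisely to the Delsarte-Gabidulin code. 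I would lean on the spirit of Menichetti's theorem and the degree-$3$ corollary to assert uniqueness of the field embedding. The delicate point is confirming that every such $2$-dimensional subspace, not merely those containing $I$, is equivalent to this one, and that no genuinely different anisotropic pencil exists; handling the reduction to the identity-containing case, and verifying that the resulting field-embedding is unique up to the $\GL$-action, is where the real work lies, though it follows the established pattern of \cite{LaSh233}.
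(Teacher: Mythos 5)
Your route coincides with the paper's up to the point where the real content begins: the paper also obtains this theorem by passing through Theorem \ref{thm:tensor} and then simply citing the classification of the corresponding subspaces from \cite{LaSh233}, offering no further argument. Your reduction is carried out correctly: with $m=2$, $n=3$, $d=m=2$ the code corresponds to a $2$-dimensional subspace of $M_3(\Fq)$ whose nonzero elements are all invertible; normalising to $\langle I,M\rangle$, noting that a cubic over $\Fq$ with no root in $\Fq$ is irreducible, and concluding $\Fq[M]\cong\Fqth$ is all fine.

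The gap is the final uniqueness step, which you explicitly flag but do not fill, and the tool you reach for is the wrong one. Menichetti's theorem and the degree-$3$ corollary concern $3$-dimensional nonsingular subspaces of $M_3(\Fq)$ (semifields of order $q^3$); they say nothing about $2$-dimensional ones. Likewise, uniqueness of the field embedding (all copies of $\Fqth$ in $M_3(\Fq)$ are conjugate, by the rational canonical form) is not the issue: after fixing the field copy there remain $q+1$ distinct subspaces $\Fq+\Fq\alpha$ through $1$, and you must show they form a single equivalence class. The argument that is actually needed: if $A\langle I,M\rangle B=\langle I,M'\rangle$, write $AB=aI+bM'$ and $AMB=cI+dM'$, so that $B^{-1}MB=(aI+bM')^{-1}(cI+dM')$; hence two such subspaces are equivalent exactly when a root $\alpha$ of the characteristic polynomial of $M$ is a Galois conjugate of a $\PGL(2,q)$-image of the corresponding $\beta$. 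Since a nontrivial element of $\PGL(2,q)$ has its fixed points in $\PG(1,q^2)$ and $\PG(1,q^2)\cap\PG(1,q^3)=\PG(1,q)$, the group $\PGL(2,q)$, of order $q^3-q$, acts sharply transitively on the $q^3-q$ points of $\PG(1,q^3)\setminus\PG(1,q)$; this single-orbit statement is precisely the uniqueness you need and is the content of the $\FF^2\otimes\FF^3$ classification in \cite{LaSh233}. Without it your argument gives existence and the field structure but not that there is \emph{precisely one} class. A minor further caveat, shared with the paper: Theorem \ref{thm:tensor} only addresses $\Fq$-linear codes, so what is actually proved is the classification of linear MRD codes with these parameters.
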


Classification of MRD codes in $M_{3\times 4}(\FF_2)$ was done in \cite{Kurz}. 
\begin{theorem}[\cite{Kurz}]
There are $37$ equivalence classes of MRD codes in $M_{3\times 4}(\FF_2)$ with minimum distance $3$. There are $7$ equivalence classes of additive MRD codes in $M_{3\times 4}(\FF_2)$ with minimum distance $3$, and by duality, $7$ equivalence classes of additive MRD codes in $M_{3\times 4}(\FF_2)$ with minimum distance $2$.
\end{theorem}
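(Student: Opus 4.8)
The statement really comprises two parts of different character. The two counts at minimum distance $3$ (the $37$ classes of all MRD codes and the $7$ classes of additive ones) are the output of an exhaustive classification, whereas the count at minimum distance $2$ is flagged as a formal consequence ``by duality''. The plan is therefore to organise an isomorph-free computer search for the distance-$3$ codes, and then to transport the additive count to distance $2$ by a Delsarte-duality argument that needs no further computation.

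For the search I would first fix the parameters: an MRD code in $M_{3\times 4}(\FF_2)$ with minimum distance $3$ has $|\C| = 2^{4(3-3+1)} = 16$, and since $\min(3,4)=3$ the condition $\rank(A-B)\geq 3$ forces every nonzero difference to have full row rank $3$. Such codes are exactly the $16$-element cliques in the graph on $M_{3\times 4}(\FF_2)$ whose edges join pairs whose difference has rank $3$. The group relevant for equivalence is $\GL(3,2)\times\GL(4,2)$, together with translations (which preserve the rank metric) for the general count; the Frobenius contributes nothing since $q=2$ is prime. Enumerating these cliques and performing isomorph rejection under this group, via canonical forms or orderly generation, yields the $37$ classes, and restricting the search to cliques that are $4$-dimensional $\FF_2$-subspaces (equivalently, requiring $0\in\C$ and additive closure, so translations drop out) yields the $7$ additive classes.

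For the last assertion I would invoke Delsarte duality. Equip $M_{3\times 4}(\FF_2)$ with the nondegenerate pairing $\langle A,B\rangle = \Tr(AB^T)$ and set $\C^\perp = \{B:\langle A,B\rangle = 0 \textrm{ for all } A\in\C\}$. Delsarte's theorem states that the dual of an additive MRD code of minimum distance $d$ in $M_{m\times n}(\Fq)$ is again an additive MRD code, of minimum distance $m-d+2$; for $m=3$, $d=3$ this gives minimum distance $2$, with $\C^\perp$ still inside $M_{3\times 4}(\FF_2)$ and of the correct dimension $12-4=8 = 4(3-2+1)$. A short computation gives $(A\C B)^\perp = (A^T)^{-1}\C^\perp (B^T)^{-1}$ and $\langle A,B\rangle = \langle A^T, B^T\rangle$, so duality carries equivalence classes to equivalence classes, commutes with transposition, and is an involution. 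It is therefore a bijection between the additive distance-$3$ classes and the additive distance-$2$ classes, and the count $7$ transfers verbatim.

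The main obstacle lies entirely in the first part: the exhaustive clique enumeration over $M_{3\times 4}(\FF_2)$, combined with reliable isomorph rejection under the sizeable group $\GL(3,2)\times\GL(4,2)$ (and translations in the general case), is where all the genuine work and all the risk of error reside, and it is precisely this computation that is outsourced to \cite{Kurz}. By contrast, once Delsarte duality and its compatibility with the equivalence relation are in place, the distance-$2$ count is immediate.
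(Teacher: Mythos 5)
Your proposal matches the paper's treatment: the theorem is stated as a citation of the exhaustive computer classification in \cite{Kurz}, with the distance-$2$ count obtained ``by duality'' exactly as you describe, so the approach is essentially the same. Your fleshing-out of the Delsarte duality step (dual minimum distance $m-d+2=2$, dual dimension $12-4=8$, and the compatibility $(A\C B)^\perp=(A^T)^{-1}\C^\perp(B^T)^{-1}$ making duality a class-preserving involution) is correct and supplies the details the paper leaves implicit.
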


Computer searches performed using MAGMA give the following. The search was made much more efficient by exploiting Theorem \ref{thm:tensor}, and using techniques developed in the classification of semifields (see for example \cite{Rua2009}). 

\begin{theorem}
There are $43$ equivalence classes of additive MRD codes in $M_{3\times 4}(\FF_3)$ with minimum distance $3$, and, by duality, $43$ equivalence classes of additive MRD codes in $M_{3\times 4}(\FF_3)$ with minimum distance $2$.
\end{theorem}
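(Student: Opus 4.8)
The plan is to translate the problem into one about square matrices via Theorem~\ref{thm:tensor} and then carry out an exhaustive but carefully pruned computer search. Since $\FF_3$ is a prime field, every additive code is automatically $\FF_3$-linear, so the theorem applies directly: classifying additive MRD codes in $M_{3\times 4}(\FF_3)$ of minimum distance $3$ up to equivalence is the same as classifying, up to equivalence (excluding transposition), the $3$-dimensional $\FF_3$-subspaces $\C=\langle E_1,E_2,E_3\rangle$ of $M_4(\FF_3)$ in which every one of the $3^3-1=26$ nonzero elements is invertible. This is exactly the type of object the semifield-classification machinery of \cite{Rua2009} is designed to handle, except that we seek a $3$-dimensional rather than a full $4$-dimensional spread set.

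First I would normalise. On the square side the equivalence acts as $\C\mapsto A\C B$ with $A,B\in\GL(4,3)$, the field automorphism group of $\FF_3$ being trivial. As $\C$ contains an invertible element we may use this action to assume $E_1=I$, after which the stabiliser of the marked element $I$ is the subgroup $\{(A,A^{-1})\}$ acting by conjugation, i.e.\ $\GL(4,3)$ acting on the remaining generators. I would use this conjugation action to put $E_2$ into a rational canonical form; the requirement that every nonzero element of $\langle I,E_2\rangle$ be invertible forces the characteristic polynomial of $E_2$ to have no root in $\FF_3$, which sharply restricts the admissible canonical forms. For each such $E_2$ I would then run over candidate third generators $E_3$, retaining only those for which all $26$ nonzero elements of $\langle I,E_2,E_3\rangle$ are invertible, and reducing the surviving $E_3$ modulo the centraliser of $E_2$ in $\GL(4,3)$.

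This orderly generation produces a complete list of candidate subspaces; the final step is isomorph rejection, computing orbits under the equivalence action $\C\mapsto A\C B$ to collapse the list to $43$ inequivalent classes, and matching these against the known constructions. The main obstacle is precisely this bookkeeping: the normalisation $I\in\C$ only partially fixes the group, so two subspaces that look different after normalisation may still be equivalent, while conversely one must be certain the pruning has discarded no valid subspace. Keeping the intermediate lists small enough to be tractable yet provably complete is where the techniques of \cite{Rua2009}, \cite{Rua2011a} are essential, and verifying the final count demands an independent canonical-form computation for each surviving candidate.

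Finally, the minimum-distance-$2$ count follows with no further search. Delsarte duality \cite{Delsarte1978} sends an additive MRD code $\C\subseteq M_{3\times 4}(\FF_3)$ of minimum distance $d$ to its trace-dual $\C^\perp$, which is again additive and MRD with minimum distance $m-d+2$; for $m=3$, $d=3$ this equals $2$. Since duality is a bijection that preserves additivity and equivalence classes, the $43$ classes of minimum distance $3$ correspond bijectively to $43$ classes of minimum distance $2$.
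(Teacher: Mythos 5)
Your proposal takes essentially the same route as the paper: the paper also proves this theorem by a MAGMA computer search that uses Theorem~\ref{thm:tensor} to reduce to classifying $3$-dimensional subspaces of $M_4(\FF_3)$ in which every nonzero element is invertible, applying the normalisation and isomorph-rejection techniques from the semifield classifications of \cite{Rua2009}, and obtaining the minimum-distance-$2$ count via Delsarte duality. Your write-up supplies more explicit detail of the search (normalising $E_1=I$, rational canonical form for $E_2$, centraliser reduction) than the paper records, but the method and the duality argument coincide.
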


This calculation took approximately 7.5 hours on a single CPU. Note that it is actually quicker to classify MRD codes in $M_{4\times 4}(\FF_3)$ with minimum distance $4$; the author's calculation takes less than one hour.

%\begin{theorem}
%There are $368$ equivalence classes of additive MRD codes in $M_{3\times 5}(\FF_2)$ with minimum distance $3$.
%
%%There are $??$ equivalence classes of additive MRD codes in $M_{3\times 5}(\FF_2)$ with minimum distance $3$.
%\end{theorem}

\subsection{Computer Classifications}
The following table lists the known computer classifications for MRD codes of rectangular matrices. Results without reference were calculated by the author using MAGMA.

\begin{center}
\begin{tabular}{|c|c|c|c|c|}
\hline
$n \times m$ &$q$&$d$&\# &Reference\\
%&&& (Knuth Orbits)&\\
\hline
$3\times 4$&2&$3$&7&\cite{Kurz}\\
\hline
$3\times 4$&3&$3$&43&\\
\hline
$3\times 5$&2&$3$&368&\\
\hline
$4\times 5$&2&$4$&2678&\\
\hline
$3\times 6$&2&$3$&95877&\cite{Rua2009}\\
%\hline
%$4\times 4$&2&$4$&3(3)&\cite{Knuth}\\
%$4\times 4$&2&$3$&1&New\\
%$4\times 4$&3&$4$&27 (12)&\cite{DempSem81}\\
%$4\times 4$&4&$4$&(28)&\cite{Rua2011a}\\
%$4\times 4$&5&$4$&(42)&\cite{Rua2011a}\\
%$4\times 4$&7&$4$&(120)&\cite{Rua2012}\\
%\hline
%$5\times 5$&2&$5$&6 (3)&\cite{Walker}\\
%$5\times 5$&3&$5$&23 (9)&\cite{Rua2011b}\\
%\hline
%$6\times 6$&2&$6$&332 (80)&\cite{Rua2009}\\
\hline
\end{tabular}
\end{center}

Note that there are many more classes of additive MRD codes in the non-square case. This is perhaps not too surprising, as many inequivalent rectangular MRD codes can be obtained from a single equivalence class of square MRD codes. See for example \cite{CsSi}, \cite{SchmidtPunc}.

\section{Structure of elements of smallest rank in additive MRD codes}
\label{sec:mrdspread}

Consider an additive MRD code $\C$ in $M_{m\times n}(\Fq)$, $m\leq n$, with minimum rank-distance $d$. Note that $\dim(\C)=n(m-d+1)$. It was shown in for example \cite[Theorem 2]{DuGoMcGSh} that the elements of minimal rank are partitioned by subspaces, as follows. For any $(m-d)$-dimensional space $U$ of $\Fq^m$, define 
\[
\C_U := \{X:X\in \C, U\leq \ker(X)\}.
\]
Let $\C_d$ denote the set of elements of $\C$ of rank $d$. Then $\dim(\C_U) = n$, $\dim(\C_U\cap \C_W)=0$ for $U\ne W$, and
\[
\C_d = \bigcup_{\dim(U)=m-d} \C_U^\times.
\]
Thus we have that $\D_\C := \{\C_U:\dim(U)=m-d\}$ is a partial $n$-spread of $\C$. Note that $|\D_\C| = {m \brack d}$.

We would like to determine whether $\C$ must contain an additive MRD-code with minimum distance $d+1$. We summarise with the following lemma.

\begin{lemma}\label{lem:key}
If there exists an additive MRD code $\C$ in $M_{m\times n}(\Fq)$, $m\leq n$, with minimum rank-distance $d$, then there exists a partial $n$-spread $\D$ of $V(n(n-d+1),q)$ of size ${m \brack d}$. If $\C$ contains an additive MRD code with minimum distance $d+1$, then there exists a subspace of $V(n(n-d+1),q)$ of dimension $n(n-d)$ meeting every element of $\D$ trivially.
\end{lemma}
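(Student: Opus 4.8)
The plan is to derive both assertions directly from the structural facts just recalled, adjusting for the gap between $m$ and $n$ in the stated dimensions by a pair of elementary embedding/padding arguments. The preceding discussion already exhibits, inside $\C\cong V(n(m-d+1),q)$, the collection $\D_\C = \{\C_U:\dim(U)=m-d\}$ of $n$-dimensional subspaces that pairwise intersect trivially, with $|\D_\C| = {m \brack d}$. Since $m\leq n$ forces $n(m-d+1)\leq n(n-d+1)$, I would fix an $\Fq$-linear embedding of $\C$ into $V := V(n(n-d+1),q)$; a family of pairwise-trivially-intersecting $n$-spaces remains such inside any larger ambient space, so the image of $\D_\C$ is the required partial $n$-spread $\D$ of $V$ of size ${m \brack d}$. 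This settles the first assertion.

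For the second assertion, suppose $\C'\subseteq\C$ is an additive MRD code of minimum distance $d+1$, so that $\dim(\C') = n(m-d)$ and every nonzero element of $\C'$ has rank at least $d+1$. The natural candidate subspace is $\C'$ itself, enlarged to the correct dimension. First I would show $\C'\cap \C_U=\{0\}$ for every $U$: by the recalled partition $\C_d=\bigcup_U \C_U^\times$, every nonzero element of $\C_U$ has rank exactly $d$, whereas every nonzero element of $\C'$ has rank at least $d+1$, so the two meet only in $0$. Since $\dim(\C')=n(m-d)$ falls short of the target $n(n-d)$ by exactly $n(n-m)=\dim(V)-\dim(\C)$, I would then pad: choose a complement $W$ with $V=\C\oplus W$ and set $S := \C'\oplus W$, which has dimension $n(m-d)+n(n-m)=n(n-d)$.

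It remains to verify that enlarging $\C'$ to $S$ creates no new intersections with $\D$, and this is the one step requiring care. The key observation is that every $\C_U$ is contained in $\C$: if $x=c'+w\in S\cap \C_U$ with $c'\in\C'$ and $w\in W$, then $w=x-c'$ lies in $\C$ (both $x$ and $c'$ do), hence in $W\cap\C=\{0\}$, so $x=c'\in \C'\cap\C_U=\{0\}$. Thus $S$ meets every element of $\D$ trivially, as required. I expect the main obstacle to be purely bookkeeping — keeping the roles of $m$ and $n$ straight and confirming that the padding dimension $n(n-m)$ matches the codimension of $\C$ in $V$ — rather than any substantive difficulty, since the rank argument and the complement argument are each short.
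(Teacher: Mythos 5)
Your proof is correct and follows essentially the same route as the paper, which justifies the lemma purely by the preceding discussion: the decomposition $\C_d=\bigcup_U \C_U^\times$ with $\dim(\C_U)=n$ and pairwise trivial intersections gives the partial spread, and the rank dichotomy (nonzero elements of $\C_U$ have rank exactly $d$, nonzero elements of a minimum-distance-$(d+1)$ subcode have rank at least $d+1$) gives the disjoint subspace. The embedding/padding step you add is needed only because the stated ambient dimension is $n(n-d+1)$ rather than $n(m-d+1)=\dim(\C)$ (almost certainly a slip of $m$ for $n$, and immaterial in the paper's application where $m=n$); your handling of it via a complement $W$ of $\C$ and the observation $\C_U\subseteq\C$ is sound.
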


\subsection{Main theoretical result: the case $d=m-1$, $n=m$}
\label{sec:main}

In this case, $\D_\C$ is a partial $n$-spread of the $2n$-dimensional space $\C$. By Lemma \ref{lem:key}, $\C$ contains a semifield spread set if and only if $\D_\C$ is not a maximal spread. By \cite{Bruen} we have the following.

\begin{theorem}[Bruen]
A maximal partial $n$-spread $\D$ of $V(2n,q)$ satisfies
\[
|\D|\leq q^n - \sqrt{q}.
\]
\end{theorem}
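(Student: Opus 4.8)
The plan is to pass to the projective picture and argue by contradiction: a maximal partial spread of small deficiency must in fact be extendable. Work in $\PG(2n-1,q)$, where the elements of $\D$ are pairwise disjoint $(n-1)$-flats. Write $|\D| = q^n+1-\delta$, so that the \emph{deficiency} $\delta$ measures how far $\D$ is from being a full spread, and let $H$ be the set of \emph{holes}: the points of $\PG(2n-1,q)$ lying on no element of $\D$. A direct count gives $|H| = \delta\,\frac{q^n-1}{q-1}$, since each flat of $\D$ covers $\frac{q^n-1}{q-1}$ points and these covers are disjoint. The key reformulation is that $\D$ is maximal (non-extendable) precisely when $H$ contains no $(n-1)$-flat, for any such flat would be disjoint from every element of $\D$ and could be adjoined. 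Thus the theorem is equivalent to the statement: if $\delta\le\sqrt q$, then $H$ contains an $(n-1)$-flat, contradicting maximality. The whole difficulty is therefore to force an $(n-1)$-flat of holes out of a deficiency that is too small.

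The first substantive step is to pin down how $H$ meets the subspaces just above the dimension of a spread element. The crucial incidence fact is that a flat of projective dimension $n$ (one more than a spread element) contains at most one element of $\D$, because two disjoint $(n-1)$-flats cannot both lie in an $n$-flat. Counting covered points then shows that every such $n$-flat meets $H$ in one of exactly two possible sizes, according to whether it contains an element of $\D$ or not; I would record this as a two-intersection property, together with the analogous statement for hyperplanes. In the base case $n=2$ (Bruen's original $\PG(3,q)$) this says that every plane contains either $\delta$ or $q+\delta$ holes, and a short double count over the pencil of planes through a line $\ell$ shows that the number of planes of the second type through $\ell$ equals the number of holes on $\ell$.

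The crux is to turn these two intersection sizes, together with the maximality constraint that no line lies in $H$ (equivalently $|H\cap\ell|\le q$ for every line $\ell$), into the quantitative inequality $\delta\ge\sqrt q+1$. My approach would be a second-moment estimate: fix a hole $P$ and consider the counts $|H\cap\ell|$ over the lines $\ell$ through $P$. The two-intersection data fixes their sum, maximality caps each term by $q$, and feeding both into a Cauchy--Schwarz / convexity inequality should force a relation of the form $\delta(\delta-1)\ge q-1$, that is $\delta\ge\sqrt q+1$. The appearance of $\sqrt q$ is not accidental: in the extremal case $q$ is a square and the holes form a Baer subgeometry $\PG(2n-1,\sqrt q)$, whose point count $(\sqrt q+1)\,\frac{q^n-1}{q-1}$ matches $|H|$ exactly, so the bound is a Baer / blocking-set phenomenon. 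I expect this averaging step to be the main obstacle, for two reasons. First, the first and second moments by themselves are consistent with \emph{every} deficiency $\delta\ge 1$---the two-intersection property holds for any partial spread, maximal or not---so the maximality hypothesis must be injected at exactly the right point, and sharply enough to see a square root rather than a weaker linear bound. Second, passing from the base case $n=2$ to general $n$ is genuinely harder: a spread element can meet an intermediate flat in a flat of positive dimension rather than a single point, so I would reduce the ambient dimension by projecting from a fixed element of $\D$ and induct, carrying the intersection bookkeeping through the projection.
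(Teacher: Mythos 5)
First, a point of reference: the paper offers no proof of this statement at all --- it is quoted verbatim from Bruen's 1975 article \cite{Bruen} and used as a black box --- so your attempt can only be measured against the published argument, not against anything internal to the paper. Your set-up is the correct classical one for the base case $\PG(3,q)$: the deficiency $\delta=q^n+1-|\D|$, the hole count $|H|=\delta\frac{q^n-1}{q-1}$, the observation that an $n$-flat contains at most one spread element, and the count showing that the number of ``rich'' planes through a line $\ell$ equals $|H\cap\ell|$ are all right. (One caveat even here: for $n\ge 3$ the two-intersection property for $n$-flats is false as stated, since a spread element not contained in an intermediate flat can meet it in a positive-dimensional flat; you flag this at the end but assert the property earlier.)

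The genuine gap is exactly where you suspect it is, and the mechanism you propose to fill it would not work. Fix a hole $P$ in $\PG(3,q)$. The first moment $\sum_{\ell\ni P}|H\cap\ell|=q^2+q+\delta(q+1)$ together with the maximality cap $|H\cap\ell|\le q$ yields no contradiction for small $\delta$ (the sum sits far below $q(q^2+q+1)$), and the second moment of $|H\cap\ell|$ over lines through $P$ is \emph{not} determined by the plane-intersection data, so there is no Cauchy--Schwarz estimate to run. The square root in Bruen's theorem does not come from a moment computation; it comes from his lower bound for blocking sets: a set of points of $\PG(2,q)$ meeting every line but containing no line has at least $q+\sqrt{q}+1$ points. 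The way it enters is via the quotient geometry at a hole $P$: exactly $q+\delta$ of the planes through $P$ contain no spread element; every line $\ell$ through $P$ lies in at least one of them (the number of such planes through $\ell$ equals $|H\cap\ell|\ge 1$) and, by maximality, in at most $q$ of them. Dualising in the quotient plane, these $q+\delta$ planes form a blocking set containing no line, whence $q+\delta\ge q+\sqrt{q}+1$. Your sketch correctly names the Baer-subplane extremal configuration but never invokes (or reproves) the blocking-set theorem, which is the actual engine of the bound. Finally, your proposed reduction of general $n$ to $n=2$ by projecting from an element of $\D$ fails outright: the quotient of $V(2n,q)$ by a spread element is $n$-dimensional and every other spread element surjects onto it, so the partial-spread structure is destroyed. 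The general case requires the translation-net/transversal formulation that Bruen actually uses (or a quotient-at-a-hole argument in $\PG(2n-2,q)$, where the images of the spread elements are no longer pairwise disjoint and the bookkeeping is genuinely harder).
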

If $q=2$, then $|\D_\C| = 2^m-1$. Hence if $n=m$, $\D_\C$ cannot be maximal; indeed, it can be extended to a spread of $\C$. Hence $\C$ contains $n$-dimensional spaces disjoint from $\C_{n-1}$, which are therefore semifield spread sets. In fact, there are precisely two such spaces contained in $\C$. Hence we have the following.

\begin{theorem}
\label{thm:main}
A binary additive MRD code with minimum distance $n-1$ contains a binary additive MRD code with minimum distance $n$. For any such MRD code $\C$ there exist two presemifields $(\FF_2^n,\star)$ and $(\FF_2^n,\circ)$ such that
\[
\C = \{x\mapsto a\star x-b\circ x:a,b\in \FF_2^n\}.
\]
\end{theorem}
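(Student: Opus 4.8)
The plan is to read off the conclusion from the partial-spread picture of Section~\ref{sec:mrdspread} together with Bruen's bound. First I would record the parameters in the case $m=n$, $d=n-1$ over $\FF_2$: by the dimension formula $\dim(\C)=n(m-d+1)=2n$, so $\C$ is a copy of $V(2n,2)$, and $\D_\C=\{\C_U:\dim(U)=1\}$ is a partial $n$-spread of $\C$ with $|\D_\C|={n\brack 1}=2^n-1$. I would also note that for $0\ne X\in\C_U$ the kernel contains the line $U$, whence $\rank(X)\le n-1$; combined with the MRD hypothesis $\rank(X)\ge n-1$ this forces $\rank(X)=n-1$. Thus every nonzero element of every $\C_U$ has rank exactly $n-1$, so these spaces are the minimal-rank part $\C_{n-1}$.

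Next I would invoke Bruen's theorem to extend $\D_\C$ to a genuine spread. A maximal partial $n$-spread of $V(2n,2)$ has size at most $2^n-\sqrt2<2^n-1$, so a partial $n$-spread of size $2^n-1$ cannot be maximal. Since a spread of $V(2n,2)$ has exactly $\tfrac{2^{2n}-1}{2^n-1}=2^n+1$ members, and every partial spread of size strictly between $2^n-\sqrt2$ and $2^n+1$ is again non-maximal by the same bound, iterating the extension produces a full spread $\D=\D_\C\cup\{S_1,S_2\}$ obtained by adjoining exactly two further $n$-dimensional subspaces $S_1,S_2$ of $\C$. A direct count confirms the number two: the spaces in $\D_\C$ cover $(2^n-1)^2$ nonzero vectors of $\C$, leaving exactly $2^{2n}-1-(2^n-1)^2=2(2^n-1)$ nonzero vectors, precisely enough for two spread members of $2^n-1$ nonzero vectors each.

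I would then identify $S_1$ and $S_2$ as semifield spread sets. Each $S_i$ is disjoint from every $\C_U$, so no nonzero element of $S_i$ lies in $\C_{n-1}$, i.e. none has rank $n-1$; with $\rank\ge n-1$ this forces every nonzero element of $S_i$ to be invertible. Hence each $S_i$ is an $n$-dimensional binary additive MRD code of minimum distance $n$, that is, a semifield spread set. By the correspondence between semifield spread sets and presemifields, choosing the parametrisation by left multiplication, I may write $S_1=\{x\mapsto a\star x:a\in\FF_2^n\}$ and $S_2=\{x\mapsto b\circ x:b\in\FF_2^n\}$ for presemifields $(\FF_2^n,\star)$ and $(\FF_2^n,\circ)$.

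Finally, since $S_1$ and $S_2$ are distinct members of a spread they meet trivially, and $\dim(S_1)+\dim(S_2)=2n=\dim(\C)$ gives $\C=S_1\oplus S_2$. Writing a general element of $\C$ as the sum of one term from each summand yields $\C=\{x\mapsto a\star x-b\circ x:a,b\in\FF_2^n\}$ (the sign being immaterial over $\FF_2$), which is the assertion. The only genuinely nontrivial ingredient is Bruen's bound; the point that requires care is the iteration guaranteeing extension all the way to a \emph{full} spread rather than a single enlargement, after which the counting of the two extra members and the rank bookkeeping certifying invertibility are routine.
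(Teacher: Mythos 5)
Your proof is correct and follows essentially the same route as the paper: form the partial $n$-spread $\D_\C$ of the $2n$-dimensional space $\C$, apply Bruen's bound to see that a partial spread of size $2^n-1$ over $\FF_2$ cannot be maximal, extend to a full spread whose two additional members are disjoint from $\C_{n-1}$ and hence are semifield spread sets spanning $\C$. You merely make explicit two steps the paper leaves implicit --- the iteration of Bruen's bound up to a full spread and the count showing exactly two members are adjoined --- which is a faithful elaboration rather than a different argument.
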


%In other words, any such MRD code must contain a semifield spread set. Furthermore, 

\section{Computational Results}
\label{sec:comp}

The result of the preceding section can be used in order to classify binary additive MRD codes very efficiently. All computations below were carried out using the computer algebra package MAGMA \cite{MAGMA}.

\subsection{$q=2$, $n=m=4$}

For $n=m=4$, there are precisely three additive MRD codes with minimum distance $4$ \cite{Knuth}. It was noted in \cite{SheekeyMRD} that only one of these can be extended to an additive MRD code with minimum distance $3$, and this extension is unique. In fact, a much stronger result was proved in \cite{Hein2017}, where it was shown that there is a unique (not necessarily additive) MRD code in $M_4(\FF_2)$ with minimum distance $3$.

\begin{theorem}[\cite{Hein2017}]
There exists precisely one MRD code in $M_4(\FF_2)$ with minimum distance $3$.
\end{theorem}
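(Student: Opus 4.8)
The plan is to prove uniqueness by reducing to a tightly constrained, symmetry-reduced search built on the structural results already assembled. Since the rank metric is invariant under the translation $\C \mapsto \C - A_0$, I would first normalise so that $0 \in \C$; then $|\C| = 2^8 = 256$ and every one of the $255$ nonzero codewords has rank $3$ or $4$. The existence half is immediate: the lifted Delsarte--Gabidulin code supplies one such $\C$, and by Theorem~\ref{thm:main} together with the classification of the three additive MRD codes of distance $4$ in \cite{Knuth}, exactly one of them extends to distance $3$, so there is at least one (additive) example. The entire content is therefore the uniqueness of the equivalence class, and the difficulty is that $\C$ is \emph{not} assumed additive, so the clean vector-space spread $\D_\C$ of Lemma~\ref{lem:key} is not directly available.

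To regain structure I would pass to the kernels. Each rank-$3$ codeword $X$ has a one-dimensional right kernel $\ker X = \langle v \rangle$, and for fixed nonzero $v$ the set $\C_v := \{X \in \C : Xv = 0\}$ lies inside the $12$-dimensional space $\{X : Xv = 0\}$; any two of its elements differ by a noninvertible matrix, so by the MRD bound their difference has rank exactly $3$. The first key step is to show, by a double count of the incidences $(X,v)$ with $Xv = 0$ against a maximality bound of Bruen type (the analogue of \cite{Bruen} used in Section~\ref{sec:main}, but now for the \emph{near}-spread of kernels rather than a genuine spread), that $|\C_v| = 16$ for every $v$ and hence that the rank distribution is forced to be $A_0 = 1$, $A_3 = 225$, $A_4 = 30$, matching the additive case. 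This rigidity is what makes the $\C_v$ behave like the blocks of a spread even without additivity, and it is, in my view, the main obstacle: proving the per-kernel count cannot exceed $16$ (equivalently, that the kernel covering is exact) is the step that genuinely requires non-additivity to be tamed rather than assumed away.

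With the rank distribution and the exact kernel partition in hand, I would translate the problem into the constant-dimension subspace code of $4$-subspaces $U_X \leq V(8,2)$ with minimum subspace distance $6$, where the forced structure becomes a rigid incidence geometry. The final step is a symmetry-reduced exhaustive construction: fixing a few codewords to break the action of $\GL(4,2) \times \GL(4,2)$ and transposition, one extends the partial code block-by-block by orderly generation with isomorph rejection, anchored by the unique additive solution so that any completion must either reproduce it or terminate. I expect the enumeration to be small once the rigidity lemma is available, since each kernel block is then essentially determined by a handful of choices; without that lemma the search space is prohibitive, which is precisely why the structural reduction, rather than the enumeration itself, is the crux.
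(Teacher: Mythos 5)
First, note that the paper itself contains no proof of this statement: it is imported wholesale from \cite{Hein2017}, where it is obtained as a by-product of a large computer classification of optimal binary subspace codes of length $8$, constant dimension $4$ and minimum subspace distance $6$ (the lifted MRD codes with these parameters sit inside those subspace codes). So your proposal should be judged as a self-contained alternative, and in outline it is of the same nature as what \cite{Hein2017} actually does: structural preprocessing followed by a symmetry-reduced exhaustive search. That final enumeration step is unavoidable here -- nobody has a computer-free proof -- and your plan for it (orderly generation with isomorph rejection under $\GL(4,2)\times\GL(4,2)$ and transposition) is the standard and correct one.

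The genuine gap is in your ``first key step.'' You want to force $|\C_v|=16$ for every nonzero $v$, equivalently the rank distribution $A_3=225$, $A_4=30$, by ``a double count against a maximality bound of Bruen type.'' Bruen's theorem, as used in Section~\ref{sec:main}, bounds maximal partial $n$-spreads of a \emph{vector space}; it applies there precisely because additivity makes the sets $\C_U$ subspaces forming a partial spread of $\C$. Without additivity the sets $\C_v$ are not subspaces and there is no partial spread to which Bruen's bound could be applied, so the tool you invoke does not engage. What you actually have available is: (i) the upper bound $|\C_v|\leq 16$, which follows not from Bruen but from the Singleton bound applied to $\C_v$ viewed (after quotienting by $\langle v\rangle$) as a code in $M_{3\times 4}(\FF_2)$ with pairwise rank distance $3$; and (ii) the resulting inequality $A_3=\sum_{v\neq 0}(|\C_v|-1)\leq 15\cdot 15=225$, i.e.\ $A_4\geq 30$. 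This is the \emph{wrong direction}: to conclude $|\C_v|=16$ for all $v$ you still need $A_4\leq 30$, and no double count of kernel incidences can give you that, since rank-$4$ codewords have trivial kernel and are invisible to the count. The missing ingredient is the fact that arbitrary (not necessarily additive) MRD codes are distance-invariant with distance distribution determined by their parameters -- a consequence of Delsarte's association-scheme machinery for the bilinear forms scheme -- which does yield the claimed weight enumerator once $0\in\C$. With that result cited in place of the Bruen-type argument, your rigidity lemma holds and the rest of the plan is sound, though it remains a computational classification rather than a conceptual proof.
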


\subsection{$q=2$, $n=m=5$}

For $n=m=5$, there are six additive MRD codes with minimum distance $5$. These were classified by Walker \cite{Walker}, and can be found online at \cite{DempData}. A computer search returns that only one of these, that being the MRD code corresponding to the field $\FF_{32}$, can be extended to an MRD code with $d=4$. There are two equivalence classes of these codes; both are generalised Delsarte-Gabidulin codes.

\begin{theorem}
There exists precisely two equivalence classes of additive MRD codes in $M_5(\FF_2)$ with minimum distance $4$. Both of these belong to the family of generalised Delsarte-Gabidulin codes. 
\end{theorem}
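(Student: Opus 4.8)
The plan is to convert Theorem~\ref{thm:main} into a finite search and carry it out. By Theorem~\ref{thm:main}, any additive MRD code $\C\subset M_5(\FF_2)$ with minimum distance $4$ has $|\C|=2^{10}$ and is the direct sum $\C=\S_1\oplus\S_2$ of exactly two binary semifield spread sets for $n=5$ (the two presemifields $\star$ and $\circ$). By Walker's classification \cite{Walker} every such spread set lies in one of six equivalence classes, with explicit representatives available at \cite{DempData}. Up to equivalence I may therefore assume $\S_1$ is one fixed representative, so the task reduces to the following: for each of the six choices of $\S_1$, find all complementary semifield spread sets $\S_2$ for which $\C=\S_1\oplus\S_2$ is MRD with $d=4$, and then sort the resulting codes into equivalence classes.

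The key simplification is that $\S_2$ need not range over all $5$-dimensional complements but only over \emph{semifield spread sets}, hence again over (embedded copies of) Walker's six. Moreover, once $\S_1$ is fixed, the MRD condition on $\C$ collapses to the single cross-rank constraint
\[
\rank(A+B)\geq 4\qquad\text{for all }0\neq A\in\S_1,\ 0\neq B\in\S_2,
\]
since any nonzero element lying entirely in $\S_1$ or in $\S_2$ is already invertible. I would thus enumerate admissible complements $\S_2$ satisfying this constraint, using the action of $\Aut(\S_1)$ (equivalently, the idealisers of $\S_1$) on the set of complements to test only one representative per orbit; this is exactly the kind of stabiliser reduction used in the semifield classifications of \cite{Rua2009}.

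Running this for all six classes, I expect five of them to admit no admissible complement, and the sixth, the field $\FF_{32}$, to admit complements whose spans are precisely the generalised Delsarte-Gabidulin codes $\{x\mapsto ax+fx^{2^s}:a,f\in\FF_{32}\}$ with $\gcd(s,5)=1$. (Note that in each such code both component semifields are themselves equivalent to $\FF_{32}$, consistent with only the field being extendable.) To finish I would apply the known equivalence criterion for generalised Gabidulin codes---two such codes with Frobenius parameters $s,s'$ are equivalent exactly when $s'\equiv\pm s\pmod 5$---which merges $s\in\{1,4\}$ and $s\in\{2,3\}$ into two classes; their inequivalence can be confirmed directly, for instance by comparing idealiser orders.

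The genuine obstacle is making the complement search tractable: a blind enumeration of $5$-dimensional subspaces of $M_5(\FF_2)$ is hopeless, so everything rests on the two reductions above---restricting $\S_2$ to semifield spread sets and quotienting by $\Aut(\S_1)$. The delicate point is ensuring these reductions are \emph{exhaustive}, i.e.\ that every additive MRD code with $d=4$ containing $\S_1$ is reached; this is precisely what Theorem~\ref{thm:main} guarantees, since it forces the complement to be a second semifield spread set rather than an arbitrary subspace.
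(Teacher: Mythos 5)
Your proposal is correct and follows essentially the same route as the paper: Theorem~\ref{thm:main} reduces the problem to a finite computer search over extensions of Walker's six semifield spread sets, of which only $\C(\FF_{32})$ survives, yielding the two generalised Delsarte--Gabidulin classes. The only difference is organisational --- the paper's accompanying table shows it extends dimension-by-dimension through quasi-MRD codes of dimensions $6$ through $10$ rather than searching directly for a complementary semifield spread set --- but both searches rest on the same reduction and the same equivalence pruning.
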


%\begin{center}
%\begin{tabular}{|c|c|}
%\hline
%Dimension&\# Classes\\
%\hline
%6&4\\
%7&4\\
%8&4\\
%9&4\\
%10&2\\
%\hline
%\end{tabular}
%\end{center}
%This table shows the number of equivalence classes of linear quasi-MRD codes containing $\C(\FF_{32})$.

Four of the other semifields can be extended to a 6-dimensional space with minimum distance 4, but not a 7-dimensional space. There are 5 equivalence classes of 6-dimensional quasi-MRD codes containing $S_i$ for $i=1,2,3,6$. None of these coincide.

There are no 6-dimensional quasi-MRD codes containing $S_4$.

Therefore there are exactly 24 equivalence classes of 6-dimensional quasi-MRD codes, 20 of which are not extendable.

\begin{center}
\begin{tabular}{|c|c||c|c|c|}
\hline
Dim&\# &$\C(\FF_{32})$&$\C(S_i)$&$\C(S_4)$\\
\hline
5&6&1&1&1\\
6&24&4&5&0\\
7&4&4&0&-\\
8&4&4&-&-\\
9&4&4&-&-\\
10&2&2&-&-\\
\hline
\end{tabular}
\end{center}
This table shows the number of equivalence classes of linear (quasi-)MRD codes, and the number containing each semifield spread set. Here the index $i$ is any of $1,2,3,6$.

\subsection{$q=2$, $n=m=6$}

Of the 332 semifields (in 180 equivalence classes, 80 Knuth orbits) of order $2^6$, which are listed in \cite{Rua2009}, only the spread set $\C(\FF_{2^6})$ of the finite field $\FF_{2^6}$ can be extended to an additive MRD code with minimum distance $5$. There is a unique such code, which is a Delsarte code. Note that the generalised Gabidulin codes in this case are all equivalent to the Delsarte code.

\begin{theorem}
There exists precisely one equivalence classes of additive MRD codes in $M_6(\FF_2)$ with minimum distance $5$. This belongs to the family of Delsarte-Gabidulin codes.
\end{theorem}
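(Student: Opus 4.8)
The plan is to turn this into a finite, well-pruned computer search by first invoking Theorem \ref{thm:main}. An additive MRD code $\C$ in $M_6(\FF_2)$ with minimum distance $5$ has $\FF_2$-dimension $n(m-d+1)=6\cdot 2 = 12$. By Theorem \ref{thm:main}, any such $\C$ contains a binary additive MRD code of minimum distance $6$, that is, a semifield spread set $\C(\S)$ of dimension $6$; more precisely $\C = \C(\star)+\C(\circ)$ is the sum of two such $6$-dimensional spread sets, and these are the only two $6$-dimensional subspaces of $\C$ disjoint from the rank-$5$ locus $\C_5$. This is the decisive reduction: rather than examining all $12$-dimensional subspaces of the $36$-dimensional space $M_6(\FF_2)$, it suffices to examine, for each semifield $\S$ of order $2^6$, the extensions of $\C(\S)$ to a $12$-dimensional additive code all of whose nonzero elements have rank at least $5$.

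Next I would feed in the complete classification of semifields of order $2^6$ from \cite{Rua2009}, namely the $332$ semifields, which fall into $80$ Knuth orbits. Since equivalence of codes includes transposition together with the action of $\GL(6,2)\times\GL(6,2)$, and since isotopy and transpose-isotopy of presemifields correspond exactly to equivalence of their spread sets, it is enough to take one representative per Knuth orbit (or, conservatively, per isotopy class). For each representative spread set $\C(\S)$ I would run an orderly backtracking search in MAGMA that adjoins generators one at a time, keeping a partial $\FF_2$-subspace alive only while every nonzero element it contains has rank at least $5$, until dimension $12$ is reached. The rank constraint prunes the search tree very aggressively, and symmetry reduction using $\Aut(\C(\S))$ (equivalently the idealisers/nuclei) collapses branches that differ by an automorphism of $\C(\S)$.

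The expected outcome is that for $331$ of the semifields the search terminates with no admissible $12$-dimensional extension, and that only the field spread set $\C(\FF_{2^6})$ extends, yielding a single code up to equivalence. To finish, I would identify this survivor with a Delsarte--Gabidulin code: taking $\phi_1(a)=a$, $\phi_2=0$, $\sigma=2^s$ with $\gcd(s,6)=1$, and $k=2$ in the construction of Section \ref{sec:construct} gives $\{x\mapsto f_0 x + f_1 x^{\sigma}: f_0,f_1\in\FF_{2^6}\}$, an $\FF_2$-code of dimension $12$ and minimum distance $n-k+1=5$ containing $\C(\FF_{2^6})$ as its $f_1=0$ part. One checks directly that the unique extension is equivalent to this code and that the admissible choices $s=1,5$ all yield equivalent codes, so there is a single equivalence class.

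The hard part is computational feasibility rather than conceptual difficulty: even after reducing to extensions of the known spread sets, a naive extension search is large, so the real work lies in organising the backtracking and exploiting $\Aut(\C(\S))$ so that the $331$ non-examples are eliminated quickly and the surviving code is certified as unique up to equivalence. Theorem \ref{thm:main}, together with the prior classification in \cite{Rua2009}, is exactly what makes this obstacle surmountable.
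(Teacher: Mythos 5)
Your proposal matches the paper's approach exactly: the paper likewise invokes Theorem \ref{thm:main} to reduce the problem to extending each of the $332$ semifield spread sets of order $2^6$ from \cite{Rua2009}, finds computationally that only $\C(\FF_{2^6})$ extends, and identifies the unique resulting code as a Delsarte code. The only small caveat is that since the full Knuth $S_3$-action does not preserve MRD-code equivalence (only transposition does), the reduction should be to one representative per equivalence class ($180$ of them) rather than per Knuth orbit ($80$), but you already hedge this correctly.
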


\section{Odd Characteristic}
\label{sec:odd}

Theorem \ref{thm:main} does not necessarily hold for $q>2$. However we can still consider the problem of classifying additive MRD codes containing a semifield spread set. We do so for the case of $M_4(\FF_3)$; in fact, this turns out to be no restriction, due to the following computational result. 

\begin{theorem}
Every additive MRD codes in $M_4(\FF_3)$ with minimum distance $3$ contains a semifield spread set.
\end{theorem}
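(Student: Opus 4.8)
=== PROOF PROPOSAL ===

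The plan is to reduce this claim to a finite computation by leveraging the partial-spread framework developed in Section~\ref{sec:mrdspread}, exactly as in the $q=2$ case, but replacing the impossible appeal to Bruen's bound with an explicit search. For an additive MRD code $\C\subset M_4(\FF_3)$ with minimum distance $d=3$, we have $m=n=4$, so $\dim(\C)=n(m-d+1)=8$, and by Lemma~\ref{lem:key} the set $\D_\C=\{\C_U:\dim(U)=1\}$ is a partial $4$-spread of the $8$-dimensional space $\C\simeq V(8,3)$ of size ${4 \brack 3}_3 = (3^4-1)/(3-1) = 40$. By Lemma~\ref{lem:key} again, $\C$ contains a semifield spread set if and only if there is a $4$-dimensional subspace of $V(8,3)$ meeting every element of $\D_\C$ trivially; equivalently, $\D_\C$ can be extended to a full $4$-spread of $V(8,3)$, which has $(3^8-1)/(3^4-1)=82$ elements. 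So the statement asserts that every such partial spread $\D_\C$ of size $40$ arising from an MRD code extends to a spread.

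First I would set up the correspondence concretely: fix a presemifield structure so that $\C$ is the span of matrices indexed by the rank-$3$ kernel spaces, and identify each $\C_U$ as a $4$-dimensional subspace. Unlike the binary case, Bruen's bound $|\D|\le 3^4-\sqrt{3}\approx 79.3$ does \emph{not} force extendability, since $40$ is far below this threshold; maximal partial $4$-spreads of $V(8,3)$ of size $40$ certainly exist in general. The crucial point is therefore \emph{not} a generic extendability statement but the special structure of spreads $\D_\C$ coming from MRD codes. I would carry out the classification computationally: enumerate (up to equivalence) the additive MRD codes in $M_4(\FF_3)$ with $d=3$ using the techniques referenced in Section~\ref{sec:comp}, and for each representative verify directly in MAGMA that the associated partial spread $\D_\C$ extends to a $4$-spread, equivalently that $\C$ contains a $4$-dimensional subspace all of whose nonzero elements are invertible.

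The main obstacle I expect is controlling the size of the search. A brute-force enumeration of all $8$-dimensional additive subspaces of $M_4(\FF_3)\simeq V(16,3)$ is infeasible, so the search must be seeded from the known classification of the $27$ isotopy classes of semifields of order $3^4$ (the $d=4$ MRD codes, cited in the table in Section~\ref{sec:construct}). The reduction strategy mirrors the $q=2$ sections: any MRD code with $d=3$ that contains a semifield spread set $\C(\S)$ is an extension of $\C(\S)$ to an $8$-dimensional space retaining minimum distance $3$, so I would enumerate, for each of the $27$ semifields, all additive extensions and check the MRD property. The theorem is precisely the assertion that running this search over all $d=3$ codes—whether or not one assumes a priori that they contain a semifield—yields only codes that \emph{do} contain a semifield spread set; thus the content is that the search finds no ``sporadic'' MRD code whose partial spread $\D_\C$ is maximal. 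The delicate part is ensuring the enumeration is exhaustive rather than merely sampling the semifield-containing codes, so I would organise the computation to classify all $8$-dimensional spaces of minimum distance $3$ up to equivalence (exploiting idealiser and automorphism-group data to prune), and confirm that every equivalence class falls into one of the three known families identified in Section~\ref{sec:odd}.
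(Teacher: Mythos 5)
Your framing via Lemma~\ref{lem:key} is correct, and you rightly identify the two essential points: Bruen's bound gives nothing for $q=3$ (the partial spread $\D_\C$ has only $40$ of a possible $82$ elements, so non-maximality is not automatic), and a search seeded only from the $27$ semifields of order $81$ would be circular, since the theorem is precisely the assertion that no additive $d=3$ code avoids containing a semifield spread set. However, your proposed repair --- ``classify all $8$-dimensional subspaces of $M_4(\FF_3)$ of minimum distance $3$ up to equivalence, exploiting idealiser and automorphism-group data to prune'' --- is left as a black box, and as stated it is not a workable algorithm: you yourself note that direct enumeration in $V(16,3)$ is infeasible, and idealiser/automorphism pruning alone does not reduce the search space to anything tractable. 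This is where the gap lies. (A minor side point: non-maximality of $\D_\C$ is equivalent to the existence of \emph{one} further $4$-space meeting all $\C_U$ trivially, not to extendability to a \emph{full} $4$-spread of $V(8,3)$; the latter is stronger and is not what the theorem requires.)

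The paper closes the gap with a concrete intermediate reduction that your proposal is missing. By the structure result of Section~\ref{sec:mrdspread}, each subcode $\C_U$ (for $U$ a $1$-dimensional kernel space) of a putative $d=3$ code in $M_4(\FF_3)$ is a $4$-dimensional additive code of constant rank $3$ supported on matrices vanishing on $U$; after a change of basis it is an additive MRD code in $M_{3\times 4}(\FF_3)$ with minimum distance $3$. These are classified first --- there are exactly $43$ equivalence classes, obtained via the tensor correspondence of Theorem~\ref{thm:tensor} --- and then each representative is embedded in $M_4(\FF_3)$ by appending a row of zeroes and extended to an $8$-dimensional code of minimum distance $3$, following the extension method of Honold--Kiermaier--Kurz. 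Only five of the $43$ seeds extend at all, and each resulting code is checked directly to contain a semifield spread set. This two-stage seeding is what makes the enumeration both provably exhaustive and computationally feasible (about $84$ hours); without it, your plan correctly identifies the statement to be verified but does not supply a means of verifying it.
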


This result was calculated by taking representatives of each of the $43$ equivalence classes of MRD codes with minimum distance $3$ in $M_{3\times 4}(\FF_3)$, adding a row of zeroes, and attempting to extend to an additive MRD code in $M_4(\FF_3)$, similar to what was described in \cite{Kurz}. The classification in $M_{3 \times 4}(\FF_3)$ was carried out using Theorem \ref{thm:tensor}. It turns out that only five of these representatives can be extended. In each case it was found that the obtained MRD codes contained semifield spread sets. The whole classification took approximately 84 hours of computation time. We describe the results of this computation now.

\subsection{$q=3$, $m=n=4$, $d=3$}

The known constructions are: Delsarte-Gabidulin (DG); Twisted Gabidulin (TG); Trombetti-Zhou (TZ). 

\begin{theorem}
There exist precisely five equivalence classes of additive MRD codes in $M_4(\FF_3)$ with minimum distance $3$ containing a semifield spread set. These belongs to the following families: Delsarte-Gabidulin; twisted Gabidulin; and Trombetti-Zhou. 
\end{theorem}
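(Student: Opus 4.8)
The plan is to reduce the problem from an infeasible search over $8$-dimensional subspaces of $M_4(\FF_3)$ to a finite search over extensions of the already-classified rectangular codes, and then to identify each surviving code by invariants. First I would specialise the results of Section~\ref{sec:mrdspread} to $m=n=4$, $q=3$, $d=3$: such a code $\C$ is an $8$-dimensional $\FF_3$-subspace (over the prime field $\FF_3$, additive closure forces $\FF_3$-linearity), its rank-$3$ elements decompose as $\C_3=\bigcup_{\dim U=1}\C_U^\times$ with each $\C_U=\{X\in\C:U\le\ker X\}$ of dimension $4$, and $\C=\C_U\oplus\C_W$ for any two distinct kernel lines $U\ne W$. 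Since every nonzero $X\in\C_U$ has rank exactly $3$ and kernel containing the fixed line $U$, it factors through $\FF_3^4/U\cong\FF_3^3$; thus $\C_U$ is, after a change of basis placing $U$ in a coordinate position, an additive MRD code in $M_{3\times4}(\FF_3)$ of minimum distance $3$ with a zero row adjoined. By the earlier classification there are exactly $43$ such codes up to equivalence.

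Next I would turn the decomposition $\C=\C_U\oplus\C_W$ into a classification by extension. Given any $\C$, choose a rank-$3$ element, let $U$ be its kernel line, and use an equivalence from $\GL(4,3)\times\GL(4,3)$ (allowing Frobenius and transposition) to move $\C_U$ onto one of the $43$ fixed representatives $\mathcal R$ in zero-row position. Then $\C$ is obtained from $\mathcal R$ by adjoining a complementary $\C_W$, so it suffices, for each $\mathcal R$, to enumerate all $4$-dimensional complements $\C_W$ supported on a second kernel line for which $\mathcal R\oplus\C_W$ has all nonzero elements of rank at least $3$, and to sort these extensions into orbits under the stabiliser of $\mathcal R$ in the equivalence group. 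Getting this bookkeeping right — ensuring that the choice of $U$ and the residual symmetry of $\mathcal R$ are accounted for so that every equivalence class of $\C$ is produced and none is counted twice — is the main subtlety of the reduction and, together with the cost of the search, the principal obstacle.

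Finally I would run the search by computer, which is the bulk of the reported computation: one finds that only five of the $43$ representatives admit such an extension, each essentially uniquely, yielding five equivalence classes. To conclude I would compute separating invariants for the five codes — the left and right idealisers (nuclei), the automorphism group order, and the rank distributions of the associated bilinear forms — and match them against the Delsarte-Gabidulin, twisted Gabidulin, and Trombetti-Zhou families of Section~\ref{sec:construct} with explicit parameters. This simultaneously shows the five classes are pairwise inequivalent and that each belongs to one of the three stated families; moreover each exhibited code visibly contains a semifield spread set, so, in view of the preceding theorem, the hypothesis of the statement is no restriction. The identification step is routine once the invariants separate the classes; the real work lies in the completeness of the reduction and in making the extension search tractable.
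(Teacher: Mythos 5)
Your proposal matches the paper's method: the paper likewise reduces to the $43$ equivalence classes of additive MRD codes in $M_{3\times 4}(\FF_3)$ with minimum distance $3$ (obtained via the tensor correspondence), adjoins a zero row, and computationally attempts all extensions to an $8$-dimensional code in $M_4(\FF_3)$, finding that exactly five representatives extend and identifying the resulting codes by idealisers, automorphism groups and contained semifield spread sets. The only difference is presentational: you make explicit the justification via the decomposition $\C=\C_U\oplus\C_W$ from Section~\ref{sec:mrdspread} that the paper leaves implicit.
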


There are 27 isotopism classes of semifields of order $81$, falling into 12 Knuth orbits; see \cite{DempSem81}, with representatives available at \cite{DempData}. These give $20$ equivalence classes of additive MRD codes in $M_4(\FF_3)$ with minimum distance $4$.

The only semifield spread sets which can be extended to additive MRD codes with minimum distance $3$ belong to the Knuth orbit of the semifields denoted $III,VI,VII,IX,X,XI,XII$ in \cite{DempSem81}. We will denote these by $\S_i$, where $i$ is the integer corresponding to the Roman numeral used by Dempwolff. %We write $\S_i^t$ to denote the transpose of $\S_i$, and $\S_i^d$ to denote the dual.

These semifields belong to the following families:  $\S_6$ is Dickson-Knuth Type 2;  $\S_7$ is a Boerner-Lantz semifield, with one nucleus of order 9; $\S_9$ is a generalised twisted field, with one nucleus of order 9; $\S_{10}$ is a cyclic semifield, with all nuclei of order 9; $\S_{11}$ is a cyclic semifield, with all nuclei of order 9; $\S_{12}$ is the field $\FF_{81}$. The semifield $\S_3$ does not belong to a known construction. The below table shows the semifield spread sets contained in each additive MRD codes. Here $t$ denotes transpose, and $d$ denotes the (semifield) dual operation; note that this is distinct from the (Delsarte) dual operation on MRD codes.

%The number of equivalence classes of MRD codes containing each is respectively $[1,2,3,4,6]$. 
\begin{center}
\begin{tabular}{|c|c|c|c||c|c|c|c|c|c|c|}
\hline
Name&Family&$[\#\cI_\ell,\#\cI_r]$ &$\# \Aut(\C)$&$\C(\FF_{81})$&$\C(\S_3)$&$\C(\S_6)$&$\C(\S_7)$&$\C(\S_9)$&$\C(\S_{10})$&$\C(\S_{11})$\\
\hline
A&TG&$[3,3]$&640&I&-&I&-&I&-&-\\
B&TG&$[3,3]$&640&I&d&-&-&I&-&-\\
C&TZ&$[9,9]$&1024&I&-&-&d,td&d,td&I&I\\
%D&TG&$[9,81]$&1280&I&-&-&-&-&I&I\\
D&TG&$[81,9]$&1280&I&-&-&-&-&I&I\\
E&DG&$[81,81]$&25600&I&-&-&-&d,td&-&I\\
\hline
\end{tabular}
\end{center}

%[ 81, 81, 25600 ]
%[ 9, 81, 1280 ]
%[ 3, 3, 640 ]

Recall here that equivalence includes the transpose/adjoint operation, and note that the effect of transposition on the idealisers is to interchange the left and right idealisers.

\section{Summary and Open Problems}
\label{sec:summary}

The following table summarises the known classifications of additive MRD codes in $M_n(\Fq)$ with minimum distance $d$ at least $n-1$. We note that, while the number of classes of semifields seems to grow quickly, the number of additive MRD codes with minimum distance $n-1$ appears to remain low, and all examples fall into known families.

%NOTE: MRD CODES LISTED INCLUDING TRANSPOSE EQUIVALENCE, SEMIFIELDS NOT. ADD ONE IN BRACKETS?

%\begin{center}
%\begin{tabular}{|c|c|c|c|}
%\hline
%q&n&d&\# Classes\\
%\hline
%2&4&4&3\\
%2&4&3&1\\
%\hline
%2&5&5&6\\
%2&5&4&2\\
%\hline
%2&6&6&180\\
%2&6&5&1\\
%\hline
%\hline
%%q&n&d&\# Classes\\
%%\hline
%%2&4&4&3\\
%%2&4&3&1\\
%%\hline
%%2&5&5&6\\
%%2&5&4&2\\
%%\hline
%%2&6&6&332\\
%%2&6&5&1(?)\\
%%\hline
%%\hline
%3&4&4&20\\
%3&4&3&5\\
%\hline
%\end{tabular}
%\end{center}

\begin{center}
\begin{tabular}{|c|c|c|c||c|c|}
\hline
q&n&d&Equiv&Iso&Knuth\\
\hline
2&4&4&3&3&3\\
2&4&3&1&-&-\\
\hline
2&5&5&3&6&3\\
2&5&4&2&-&-\\
\hline
2&6&6&180&332&80\\
2&6&5&1&-&-\\
\hline
\hline
%q&n&d&\# Classes\\
%\hline
%2&4&4&3\\
%2&4&3&1\\
%\hline
%2&5&5&6\\
%2&5&4&2\\
%\hline
%2&6&6&332\\
%2&6&5&1(?)\\
%\hline
%\hline
3&4&4&20&27&12\\
3&4&3&5&-&-\\
\hline
\end{tabular}
\end{center}

Note that there are $332$ isotopy classes of semifields of order $64$, falling into $80$ Knuth orbits. However as the $S_3$-action of the Knuth orbit does not generalise to all MRD codes, we present instead the number of equivalence classes as defined in this paper. We note that this is the number of semifields up to isotopy and transposition. Similarly, there are $27$ isotopy classes of semifields in $12$ Knuth orbits.

Here we present some natural open questions suggested by the results of this paper.
\begin{itemize}
\item
Do there exist any proper non-additive MRD codes for $q=2$, $n=5,6$, $d=n-1$?
\item
Do there exist any additive MRD codes for $q=2^e$, $e>1$, not containing a semifield spread set? 
%The smallest open case is $q=4$, $n=3$, $d=3$.???
\item
Do there exist any additive MRD codes not containing a semifield spread set for $q$ odd? The smallest open case is $q=3$, $n=4$, $d=3$.
\item
Are additive MRD codes with minimum distance $n-1$ always less plentiful than semifields?
\item
Are all additive MRD codes with minimum distance $n-1$ equivalent to one of the known constructions?
\item
Do all additive MRD codes with minimum distance $d<n$ contain an additive MRD code with minimum distance $d+1$?
\item
Do all additive MRD codes with minimum distance $d<n$ contain a code equivalent to $\C(\Fqn)$?
\end{itemize}

%\section{Even Characteristic, non-binary}
%
%\section{Covering Radius}
%
%One of the semifields of order $2^6$ cannot be extended to a 6-dimensional space with minimum distance 4. Thus the covering radius is (at most) 3. There is also a semifield of order 16 with covering radius 2.

\end{document}